\newtheorem{theorem}{Theorem}
\newtheorem{proposition}[theorem]{Proposition}
\newtheorem{lemma}[theorem]{Lemma}
\newtheorem{corollary}{Corollary}[theorem]
\DeclareMathOperator{\gl}{GL}
\newcommand{\ZZ}{\mathbb{Z}}
\newcommand{\RR}{\mathbb{R}}
\newcommand{\CC}{\mathbb{C}}
\newcommand{\OO}{\mathcal{O}}
\newcommand{\mtrxfo}[8]{\left(\begin{array}{cccc}#1 & #2 & #3 & #4 \\ #5 & #6 & #7 & #8 \\ }
\newcommand{\mtrxft}[8]{ #1 & #2 & #3 & #4 \\ #5 & #6 & #7 & #8 \end{array}\right)}
\DeclareMathOperator{\Sym}{Sym}
\DeclareMathOperator{\Tr}{Tr}
\DeclareMathOperator{\go}{GO}
\DeclareMathOperator{\disc}{\!Disc}
\DeclareMathOperator{\vol}{Vol}
\newcommand{\mc}[1]{\mathcal{#1}}
\begin{document}
\title[Equidistribution of shapes of rings of integers]{The Equidistribution of Lattice Shapes of Rings of Integers in Cubic, Quartic, and Quintic Number Fields}
\author{Manjul Bhargava and Piper Harron}

\date{\today}
\maketitle

\begin{abstract}
For $n=3$, $4$, and $5$, we prove that, when $S_n$-number fields of degree $n$ are ordered by their absolute discriminants, the lattice shapes of the rings of integers in these fields become equidistributed in the space of  lattices. 
\end{abstract}
\tableofcontents

\section{Introduction}
Let $K$ be a number field of degree $n$ and $\mathcal{O}_K$ its ring of integers.  Then $\mathcal{O}_K$ can be embedded in $\mathbb{R}^n$ by 
\begin{equation}\label{embedding}
 x \mapsto (\sigma_1(x),
\ldots,\sigma_r(x), \tau_1(x),\ldots,\tau_s(x)) \in \mathbb{R}^r \times \mathbb{C}^s \cong \mathbb{R}^n
\end{equation}
where $\sigma_1, \ldots, \sigma_r$ denote the $r$ real embeddings and $\tau_1, \tau_1',\ldots, \tau_s,\tau_s'$ the $s$ pairs of complex embeddings of $K.$  This endows $\OO_K$ with a natural positive definite quadratic form $q$, namely, for $x \in \mathcal{O}_K$, we define 
\begin{equation}\label{qdef1}
q(x):= \sigma_1(x)^2 + \cdots + \sigma_r(x)^2 + 2|\tau_1(x)|^2 + \cdots + 2|\tau_s(x)|^2.
\end{equation}
 The square of the covolume of the lattice in $\RR^n$ given by the image of $\OO_K$ (equivalently, the determinant of the Gram matrix of the quadratic form $q$) is given by the absolute value~$|\disc(K)|$ of the discriminant of the number field $K$.
When $K$ is totally real, the form $q(x)$ coincides with the usual {\it trace form} $\mathrm{Tr}(x^2)$ on $\OO_K$.

The \emph{shape} of $\mathcal{O}_K$ is defined to be the $(n-1)$-ary quadratic form, up to scaling by $\RR^\times$,  obtained by restricting $q$ to $\{ x\in \ZZ+n\mathcal{O}_K : \Tr_\mathbb{Q}^K(x)=0\},$ which is therefore well-defined up to the action of $\mathbb{G}_m(\RR)\times \gl_{n-1}(\mathbb{Z})$.
Alternatively, the shape of $\mathcal{O}_K$ may be defined as the $(n-1)$-ary quadratic form, up to scaling by $\RR^\times$, obtained by restricting the real quadratic form $q$ on $K$ (as defined by (\ref{qdef1})) to the projection of $\mathcal{O}_K$ onto the hyperplane in~$K$ that is orthogonal to~1.  (It is convention to define the shape of $\mathcal O_K$  in terms of the lattice orthogonal to $\mathbb Z$ in $\mathcal O_K$, because $\mathbb Z$ is always a subring of $\mathcal O_K$, while the orthogonal complement of $\mathbb Z$ gives the ``new'' part of the lattice.)  
Hence the shape of $\mathcal O_K$ may be viewed as an element of $$\mathcal{S}_{n-1} := \gl_{n-1}(\mathbb{Z})\backslash \gl_{n-1}(\mathbb{R}) / \go_{n-1}(\mathbb{R})
 $$ which we call the \emph{space of shapes} of lattices of rank $n-1$.  There is a natural measure $\mu$ on $\mathcal{S}_{n-1}$ obtained from the Haar measure on $\gl_{n-1}(\mathbb{R})$ and $\go_{n-1}(\mathbb{R}),$ and it is a classical result of Minkowski~\cite[(85)]{Minkowski} that $\mu(\mathcal{S}_{n-1}) < \infty.$ 

In \cite{Terr}, Terr showed that when cubic fields of any given signature are 
ordered by absolute discriminant, then the shapes of the rings of integers in these 
fields become equidistributed in $\mathcal{S}_2$ with respect to $\mu$. The purpose 
of this paper is to prove the analogue of this statement also for quartic and 
quintic number fields:

\begin{theorem}\label{thm:main}
Let $n=3$, $4$, or $5$.  When isomorphism classes of $S_n$-number fields of degree~$n$ and any given signature are ordered by absolute discriminant, the shapes of the rings of integers in these fields become equidistributed in $\mc{S}_{n-1}$ with respect to $\mu$.
\end{theorem}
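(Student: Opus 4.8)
The plan is to combine the parametrization of rings of integers in degree-$n$ fields by integral orbits of a suitable representation $(G_n,V_n)$ with the Davenport--Heilbronn and Bhargava counting techniques, refined so as to keep track of the lattice shape. For $n=3$ we use the Delone--Faddeev correspondence between cubic rings and $\gl_2(\ZZ)$-classes of integral binary cubic forms $V_3=\Sym^3\ZZ^2$; for $n=4$, Bhargava's correspondence between quartic rings (equipped with a cubic resolvent ring) and $\gl_2(\ZZ)\times\gl_3(\ZZ)$-classes of pairs of integral ternary quadratic forms $V_4=\ZZ^2\otimes\Sym^2\ZZ^3$; and for $n=5$, Bhargava's correspondence between quintic rings (with a sextic resolvent ring) and $\gl_4(\ZZ)\times\gl_5(\ZZ)$-classes of quadruples of integral alternating $5\times 5$ forms $V_5=\ZZ^4\otimes\wedge^2\ZZ^5$. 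In each case the maximal orders of $S_n$-fields correspond to the integral points satisfying a family of local maximality conditions together with an irreducibility condition, and the Davenport--Heilbronn/Bhargava sieve isolates exactly these orbits; the restriction $n\le5$ is precisely what makes such a parametrization available.

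First I would exhibit the shape as a covariant: for each $n$ there should be a polynomial, $G_n(\RR)$-equivariant map $\Phi_n\colon V_n(\RR)\to\{\text{real quadratic forms in }n-1\text{ variables}\}$ which is equivariant for the $\gl_{n-1}$-factor of $G_n$ and invariant up to scaling under the complementary factor (trivial when $n=3$), and such that for $v\in V_n(\ZZ)$ corresponding to $\OO_K$ the class of $\Phi_n(v)$ in $\mc{S}_{n-1}=\gl_{n-1}(\ZZ)\backslash\gl_{n-1}(\RR)/\go_{n-1}(\RR)$ is exactly the shape of $\OO_K$. For $n=3$, $\Phi_3$ is (a variant of) the classical Hessian covariant of binary cubic forms and the verification is essentially Terr's; for $n=4,5$ one constructs $\Phi_n$ from the resolvent-ring data and checks the identification by a direct computation with the associated multiplication tables. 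This produces a well-defined, $G_n(\ZZ)$-invariant shape map $\mathrm{sh}\colon G_n(\ZZ)\backslash V_n(\ZZ)^{\mathrm{nondeg}}\to\mc{S}_{n-1}$.

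Next I would prove the refined asymptotic count. Fix a signature $\sigma$ and an open set $W\subseteq\mc{S}_{n-1}$ with $\mu(\partial W)=0$. Since $\mathrm{sh}$ is $G_n(\ZZ)$-invariant and is cut out by the equivariant covariant $\Phi_n$, the region $R_W(X)=\{v\in V_n(\RR)^{(\sigma)}:0<|\disc(v)|<X,\ \mathrm{sh}(v)\in W\}$ is $G_n(\ZZ)$-stable and is a union of ``slices'' of the single open $G_n(\RR)$-orbit $V_n(\RR)^{(\sigma)}\cong G_n(\RR)/\Gamma_\sigma$, where $\Gamma_\sigma$ is finite modulo the central subgroup of $G_n$ acting trivially on $V_n$. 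Running the Davenport--Heilbronn/Bhargava lattice-point count on a fundamental domain for $G_n(\ZZ)$ acting on $R_W(X)$ goes through as in the known case: imposing $\mathrm{sh}\in W$ only shrinks the region, so the estimates for the cusp, where reducible and non-maximal points accumulate, remain $o(X)$ by monotonicity from the full count. One obtains that the number of isomorphism classes of $S_n$-fields of signature $\sigma$ with $|\disc|<X$ and shape in $W$ is asymptotic to $\mathrm{Vol}(G_n(\ZZ)\backslash R_W(X))$ times the constant appearing in the Davenport--Heilbronn/Bhargava count. Finally, a change of variables on $G_n(\RR)/\Gamma_\sigma$ --- using that $\disc$ is a relative invariant transforming by a character of $G_n$, and that $\mathrm{sh}$ factors through the projection $\gl_{n-1}(\RR)\to\gl_{n-1}(\RR)/\go_{n-1}(\RR)$, with the scaling (``determinant'') directions of $\gl_{n-1}$ being exactly those killed by $\go_{n-1}$ --- shows that the pushforward of Haar measure along $\mathrm{sh}$ is a fixed multiple of $\mu$, and hence $\mathrm{Vol}(G_n(\ZZ)\backslash R_W(X))=c\,X\,\mu(W)$ with $c$ independent of $W$. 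Comparing with the case $W=\mc{S}_{n-1}$, for which the count is the known asymptotic for $S_n$-fields, gives exactly equidistribution with respect to $\mu$.

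The step I expect to be the main obstacle is the combination of the covariant identification and the final Jacobian computation: writing down $\Phi_n$ explicitly for $n=4,5$, proving that it really computes the shape of $\OO_K$ (a computation with Bhargava's resolvent rings and normalized integral bases), and verifying that the induced map from the real orbit is, measure-theoretically, a submersion onto $\mc{S}_{n-1}$ compatible with Haar measure --- equivalently, that $\Gamma_\sigma$ lands inside $\go_{n-1}(\RR)$ and that the pushforward of Haar measure is a constant multiple of $\mu$. Once this compatibility is in place, the remaining ingredients are routine adaptations of the arguments already used to count $S_n$-number fields of degree $n\le5$.
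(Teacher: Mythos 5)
Your proposal is correct and follows essentially the same route as the paper: the Delone--Faddeev/Bhargava parametrizations, the shape realized as a $\gl_{n-1}$-equivariant covariant of the representation (the paper obtains this uniformly from the parametrization over $\RR$ rather than from explicit formulas like the Hessian), a shape-refined geometry-of-numbers count, a sieve down to maximal orders, and a Jacobian/Haar-measure decomposition showing the pushforward of the orbit measure to $\mc{S}_{n-1}$ is a multiple of $\mu$. The one step where you diverge is the refined count: you propose to re-run the cusp estimates with the condition $\mathrm{sh}\in W$ imposed, invoking monotonicity, whereas the paper avoids revisiting the cusp altogether via a complement trick --- it proves the lower bound $N(V_\ZZ^{(i)};X,W)\geq \frac{1}{n_i}\vol(\mc{R}_{1,W})X+o(X)$ for both $W$ and $\overline{W}=\mc{S}_{n-1}-W$ (by approximating each from inside by bounded sets, which also disposes of the case of unbounded $W$, a point your sketch glosses over) and then notes that the two lower bounds sum to the known asymptotic for the full count, forcing equality in each. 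Both routes work; the paper's is cleaner because it never requires the shape-restricted region to be nice near the cusp.
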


More precisely, for $n=3$, $4,$ or $5$, let $N_n^{(i)}(X)$ denote the number of 
isomorphism classes of $n$-ic fields having $i$ pairs of complex embeddings, 
associated Galois group $S_n$, 
and absolute discriminant less than $X$.  Also, for a measurable subset 
$W\subseteq\mc{S}_{n-1}$ whose boundary has measure $0$, let $N_n^{(i)}(X,W)$ denote 
the number of isomorphism classes of $n$-ic fields having $i$ pairs of complex 
embeddings, associated 
Galois group $S_n$, absolute discriminant less than $X$, and ring of integers with 
shape in $W$. 
Then, we prove that 
\begin{equation}\label{toprove} 
\lim_{X\rightarrow\infty}\frac{N_n^{(i)}(X,W)}{N_n^{(i)}(X)}=\frac{\mu(W)}{\mu(\mc{S}_{n-1})}. 
\end{equation}

The condition that the associated Galois group be $S_n$ may be dropped in Theorem~\ref{thm:main} in the cases $n=3$ and $n=5$, since 100\% of all cubic fields (resp.\ quintic fields), when ordered by discriminant, have associated Galois group $S_3$ (resp.\ $S_5$).  However, the condition is needed in the case $n=4$, as the Galois group $S_4$  does {\it not} occur with density 1 among all quartic fields when ordered by discriminant.  Indeed, about $9.356\%$ of all quartic fields have associated Galois group $D_4$ rather than $S_4$, and the lattice shapes of the rings of integers in $D_4$-quartic fields cannot be equidistributed, as is easily seen.  For example, note that
if $K$ is a $D_4$-quartic field, then $K$ has a nontrivial automorphism of order $2$ which means that $\mc{O}_K$ does too, as does its underlying lattice.
It is an interesting problem to determine the distribution of lattice shapes for $n$-ic number fields having a given non-generic (i.e., non-$S_n$) associated Galois group, even heuristically.
For the simple answer in the case of $C_3$-cubic number fields, and related results, see \cite{BhSh}.
In the general case of associated Galois group $S_n$, we naturally conjecture that Theorem~\ref{thm:main} is true for all values of $n$.

Our proof of Theorem~\ref{thm:main} is uniform for $n\in\{3,4,5\}$.  It relies on the existence of parametrizations of cubic, quartic, and quintic orders by the orbits of an algebraic group on a representation 
as established in \cite{DF,III,IV}, and the corresponding counting results of \cite{DavI, DavII, DD4,DD5}, together with certain geometry-of-numbers and sieving arguments and volume computations.  In particular, our method yields a considerably simpler proof of Terr's result (which is the case $n=3$).

This article is organized as follows.  In Section 2, we describe in a uniform manner the above parametrizations of cubic, quartic, and quintic orders from \cite{DF,III,IV}.  In Section 3, we show how certain geometry-of-numbers considerations, in conjunction with the counting results of \cite{DavI, DavII, DD4,DD5}, yield expressions for the number of orders of bounded discriminant having lattice shape in a given subset~$W$ of lattice shapes.  The corresponding results for orders satisfying any finite set of local conditions at finitely many primes are then discussed in Section 4.  In Section~5, this is used, via a sieve, to derive analogous expressions for the number of {\it maximal} orders of bounded discriminant having lattice shape in a given subset $W$ of lattice shapes.  These expressions are given in terms of volumes of certain regions in Euclidean space.  The ratios of these volumes are then computed in Section 6, yielding Theorem~\ref{thm:main}.

\section{Preliminaries}

The key algebraic ingredient in proving Theorem \ref{thm:main} for cubic, quartic, and quintic fields is the parametrizations of cubic, quartic, and quintic orders in \cite{DF, III, IV}. Let us fix the degree $n\in\{3,4,5\}$ of number fields we are considering, and for any ring $T$, let $V_T$ denote
\begin{enumerate}
	\item the space $\Sym^3T^2(\otimes T)$ of binary cubic forms over $T$, if $n=3$;
	\item the space $\Sym^2T^3\otimes T^2$ of pairs of ternary quadratic forms over $T$, if $n=4$; and
	\item the space $T^4\otimes\wedge^2T^5$ of quadruples of alternating quinary $2$-forms over $T$, if $n=5$.
\end{enumerate}
For $n=3$, $4$, or $5$, we set $r=r(n)=2$, $3$, or $6$, respectively.  Then the group $G_T=\gl_{n-1}(T)\times \gl_{r-1}(T)$ acts naturally on $V_T$.
The ring of polynomial invariants for the action of $G_\ZZ$ on $V_\ZZ$ is generated by one element, called the \emph{discriminant}, which has degree $d=4$, $12$, or $40$ for $n=3,$ $4$, or $5$, respectively (see \cite{SK}). (Note that $d$ also gives the rank of the free $T$-module $V_{T}$.) We say that an element of $V_T$ is \emph{nondegenerate} if its discriminant is nonzero.  

We then have the following theorem parametrizing rings of rank $n$, which follows from \cite[\S 15]{DF}, \cite[
Corollary 5]{III}, \cite[
Corollary 3]{IV}:

\begin{theorem}\label{par}
The nondegenerate elements of $V_\ZZ$ are in canonical bijection with isomorphism classes of pairs $((R,\alpha),(S,\beta))$, where $R$ is a nondegenerate ring of rank $n$ and $S$ is a rank $r$ resolvent ring of $R$, and $\alpha$ and $\beta$ are $\ZZ$-bases for $R/\ZZ$ and $S/\ZZ$, respectively. In this bijection, the discriminant of an element of $V_\ZZ$ is equal to the discriminant of the corresponding ring $R$ of rank $n$. 
Moreover, under this bijection, the action of $G_\ZZ$ on $V_\ZZ$ results in a corresponding natural action of $G_\ZZ
=\gl_{n-1}(\ZZ)\times \gl_{r-1}(\ZZ)$ 
on $(\alpha,\beta)$. 
Finally, every isomorphism class of maximal ring $R$ of rank $n$ arises in this bijection, and the elements of $V_\ZZ$ yielding $R$ consists of exactly one $G_\ZZ$-orbit.
\end{theorem}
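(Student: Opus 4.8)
The plan is to deduce Theorem~\ref{par} by assembling the three parametrization results it cites --- Delone--Faddeev \cite[\S 15]{DF} for $n=3$, and Bhargava \cite[Cor.~5]{III} and \cite[Cor.~3]{IV} for $n=4$ and $n=5$ --- and re-expressing each of them in the common language of a based ring $(R,\alpha)$ of rank $n$ equipped with a based resolvent ring $(S,\beta)$ of rank $r$. Because the representations $V_T$ and the algebra involved differ genuinely among the three values of $n$, I would not look for a case-uniform argument; rather, for each $n$ I would carry out the five steps below and then check that the resulting statement matches the theorem as stated.

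First I would construct the forward map. Given a nondegenerate $v\in V_\ZZ$, one writes down explicit multiplication tables for a commutative ring $R$ on a basis $1,\alpha_1,\dots,\alpha_{n-1}$ and for a commutative ring $S$ on a basis $1,\beta_1,\dots,\beta_{r-1}$, together with the resolvent map relating them, the structure constants being fixed polynomials with integer coefficients in the coordinates of $v$. The correct polynomials are forced by the generic situation: over $\mathbb{Q}$ a nondegenerate $v$ corresponds to an \'etale algebra $R$ and its resolvent, where everything is determined, and one checks that the structure constants produced there are in fact $\ZZ$-polynomial in $v$, so the tables define rings over $\ZZ$ for every $v$. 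Commutativity and associativity of $R$ and $S$, and the defining compatibility of the resolvent map, then reduce to polynomial identities in $v$ holding on a Zariski-dense set, hence identically.

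Second, from $(R,\alpha)$, $(S,\beta)$ and the resolvent structure I would recover the coordinates of a unique $v\in V_\ZZ$ --- the resolvent map being exactly the extra datum, beyond the two based rings, that this recovery requires --- and verify that the two constructions are mutually inverse, again by passing to the generic case. This establishes the canonical bijection, and the equality $\disc(v)=\disc(R)$ follows since both sides are polynomials in $v$ of the same degree $d\in\{4,12,40\}$ (by \cite{SK}) agreeing generically. Inspection of the construction then shows that the action of $(g_1,g_2)\in G_\ZZ=\gl_{n-1}(\ZZ)\times\gl_{r-1}(\ZZ)$ on $v$ leaves $R$, $S$, and the resolvent map unchanged and simply replaces $\alpha$ by $g_1\alpha$ and $\beta$ by $g_2\beta$, so that $G_\ZZ$-orbits on nondegenerate $v$ match isomorphism classes of the pair data $((R,\alpha),(S,\beta))$.

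The genuine content lies in the final assertion about maximal rings: every maximal ring $R$ of rank $n$ occurs, and the $v$ yielding a fixed maximal $R$ form a single $G_\ZZ$-orbit. Here one must show that a maximal $R$ admits a resolvent $S$ that may --- and then must --- be taken maximal in its resolvent algebra, so that $S$ is determined up to isomorphism; the non-uniqueness of resolvents that appears for a general $R$ (a quartic ring can have several non-isomorphic cubic resolvents, and likewise in the quintic case) collapses in the maximal case. With $(R,S)$ pinned down, the only remaining freedom is the choice of the two bases together with the resolvent map, which one shows is also forced, so that the fiber is exactly one $G_\ZZ$-orbit. Both points are local, to be checked one prime at a time, using that a maximal order over $\ZZ_p$ is a product of maximal orders in local fields, where the resolvent and its realization inside $V_{\ZZ_p}$ are rigid. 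I expect this local analysis at the primes dividing $\disc(R)$ to be the main obstacle; it is carried out in \cite[\S 15]{DF}, \cite[Cor.~5]{III}, and \cite[Cor.~3]{IV}, and I would invoke those results rather than reproduce them.
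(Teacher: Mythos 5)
Your overall strategy coincides with the paper's: Theorem~\ref{par} is not proved in the paper at all, but is quoted directly from \cite[\S 15]{DF}, \cite[Corollary 5]{III}, and \cite[Corollary 3]{IV}, and your description of the forward map (integer-polynomial multiplication tables for $R$ and $S$ in the coordinates of $v$, verified generically and extended by Zariski density), the inverse map, the identity $\disc(v)=\disc(R)$, and the $G_\ZZ$-equivariance on $(\alpha,\beta)$ is an accurate account of how those references proceed. Since you ultimately invoke the cited corollaries for the statement about maximal rings, the proof goes through.

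One intermediate claim in your sketch of the maximality step is, however, false and should be corrected: the resolvent ring $S$ of a maximal ring $R$ need not be (and typically is not) maximal in its resolvent algebra. In all three cases one has $\disc(S)=\disc(R)$; already for $n=3$ the quadratic resolvent of a maximal cubic ring is the quadratic ring of discriminant $\disc(R)$, which is maximal only when $\disc(R)$ is a fundamental discriminant, and similarly the cubic (resp.\ sextic) resolvent of a maximal quartic (resp.\ quintic) ring is usually a non-maximal order in the resolvent field. So the route ``$S$ may and must be taken maximal, hence is determined'' would fail. The uniqueness of the resolvent for maximal $R$ is established in \cite{III,IV} by a different mechanism: the number of resolvents of $R$ is computed in terms of the content of $R$ (e.g., \cite[Corollary 4]{III}), and a maximal ring has content $1$, whence exactly one resolvent; the transitivity of $G_\ZZ$ on the fiber then comes from the accompanying local rigidity analysis, much as you describe. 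With that correction, your argument matches the source results you are citing.
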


A {\it ring of rank $n$} is any ring that is free of rank $n$ as a $\ZZ$-module. 
We say that a ring of rank $n$ is {\it nondegenerate} if it has nonzero discriminant.
Rings of rank $2$, $3$, $4$, $5$, or $6$ are called {\it quadratic}, {\it cubic}, {\it quartic}, {\it quintic}, and {\it sextic} rings, respectively. 
A \emph{resolvent ring} of a cubic, quartic, or quintic ring is a quadratic, cubic, or sextic ring, respectively, satisfying certain conditions and whose precise definition will not be needed here (see \cite{III,IV} for details).  

We say that an element $x\in V_\mathbb{Z}$ is \emph{irreducible} if in the corresponding pair $(R,S)$, the ring $R$ is isomorphic to an order in an $S_n$-field of degree $n.$  In the next section, we will be interested in counting irreducible elements in $V_\ZZ$, up to $G_\ZZ$-equivalence, having bounded discriminant.

We note that Theorem~\ref{par} also holds with any field $K$ in place of $\ZZ$, with the same proofs as in~\cite{III,IV} (see also \cite{WY} for earlier results of this type).  We will require here the analogue of Theorem~\ref{par} over $\RR$:

\begin{theorem}\label{par2}
There is a canonical bijection between the nondegenerate elements of $V_\RR$ and isomorphism classes of pairs $((R,\alpha),(S,\beta))$, where $R$ is a nondegenerate ring of rank $n$ over~$\RR$ and $S$ is the $($unique$)$ rank $r$ resolvent ring of $R$ over $\RR$, and $\alpha$ and $\beta$ are $\RR$-bases for $R/\RR$ and $S/\RR$, respectively.
Moreover, under this bijection, the action of $G_\RR$ on $V_\RR$ results in the corresponding natural action of $G_\RR
=\gl_{n-1}(\RR)\times \gl_{r-1}(\RR)$ 
on $(\alpha,\beta)$. 
\end{theorem}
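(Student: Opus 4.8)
The plan is to deduce the statement over $\RR$ from the integral parametrization of Theorem~\ref{par} together with the standard structure theory of étale $\RR$-algebras, essentially by "extension of scalars and reassembly."  First I would recall that a nondegenerate ring $R$ of rank $n$ over $\RR$ — that is, an $\RR$-algebra free of rank $n$ with nonzero discriminant — is the same thing as an étale $\RR$-algebra of dimension $n$, hence is isomorphic to $\RR^{n-2i}\times\CC^i$ for a unique $i$ with $0\le i\le\lfloor n/2\rfloor$; these are exactly the $\RR$-algebras arising as $K\otimes_\QQ\RR$ for a degree-$n$ number field $K$ of signature $(n-2i,i)$.  In particular, for each such $R$ there is, as asserted, a unique rank-$r$ resolvent ring $S$ over $\RR$: uniqueness and existence of the resolvent are already built into the field-coefficient version of Theorem~\ref{par} (valid over any field, as noted in the excerpt), applied to each factor, and over $\RR$ the only étale algebras of the relevant dimension $r$ are again of the form $\RR^{r-2j}\times\CC^j$, so the resolvent construction pins down a unique one.

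The core of the argument is to produce the bijection.  In one direction, given a nondegenerate $x\in V_\RR$, one forms $R$ and $S$ by base-changing the constructions of \cite{DF,III,IV} to $\RR$; since these constructions are defined by polynomial formulas in the coordinates of $x$ (the multiplication tables of $R$ and $S$ are given by universal polynomials in the entries of $x$), they make sense over $\RR$ verbatim, and the $\RR$-bases $\alpha$ of $R/\RR$ and $\beta$ of $S/\RR$ are the images of the standard bases.  That $R$ is nondegenerate follows from $\disc(x)\neq 0$, using the identity $\disc(R)=\disc(x)$, which holds over $\ZZ$ by Theorem~\ref{par} and hence over any base by the universality of the formulas.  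Conversely, given $((R,\alpha),(S,\beta))$ over $\RR$, writing the structure constants in the bases $\alpha,\beta$ recovers a point of $V_\RR$; one checks this is inverse to the previous map exactly as in the integral case.  Equivariance for $G_\RR$ is immediate: changing the basis $\alpha$ by $g\in\gl_{n-1}(\RR)$ and $\beta$ by $h\in\gl_{r-1}(\RR)$ transforms the structure constants precisely by the defining action of $(g,h)$ on $V_\RR$, again because the action over $\ZZ$ is given by the same polynomial formulas.

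The one point requiring genuine (though not difficult) work — and the step I would flag as the main obstacle — is the \emph{surjectivity and well-definedness over a non-algebraically-closed, non-field base when $R$ is not connected}, i.e.\ verifying that every étale $\RR$-algebra $R=\RR^{n-2i}\times\CC^i$ of each admissible signature, equipped with any basis, actually arises from some $x\in V_\RR$, and that it has the unique resolvent claimed.  For this I would invoke the field version of Theorem~\ref{par} over $\RR$ and over $\CC$ to handle each factor, then observe that the parametrizations in \cite{III,IV} are compatible with products of rings (the representation $V$ decomposes accordingly), so one assembles $x$ from the pieces; alternatively, and more cleanly, one notes that nondegeneracy is a Zariski-open condition and the map $V_\RR\to\{\text{pairs}\}$ is an algebraic isomorphism on the nondegenerate locus already over $\ZZ[1/N]$ for suitable $N$ (by Theorem~\ref{par}), so base-changing to $\RR$ gives the result directly, with signatures read off from the real points.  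Once this is in place, the remaining assertions are formal.
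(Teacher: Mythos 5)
The paper does not really prove Theorem~\ref{par2}: it simply asserts that Theorem~\ref{par} ``also holds with any field in place of $\ZZ$, with the same proofs as in \cite{III,IV}'' (citing \cite{WY} for earlier field-coefficient results), and then only records the compatibility with the integral statement via the universal polynomial formulas for the multiplication tables, together with an explicit check of the eight nondegenerate real orbits to identify the resolvents. Your construction of the forward map, the equivariance, and the identity $\disc(R)=\disc(x)$ via universality of the formulas all agree with what the paper says on these points and are fine.

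The genuine gap is precisely the step you flag, and neither of your two proposed fixes closes it as stated. The ``cleaner'' alternative --- that Theorem~\ref{par} makes $V\to\{\text{pairs}\}$ an algebraic isomorphism on the nondegenerate locus over $\ZZ[1/N]$ --- is a non sequitur: Theorem~\ref{par} is a bijection on $\ZZ$-points, and a bijection on integral points neither base-changes to a bijection on $\RR$-points nor implies the map is an isomorphism of schemes; surjectivity onto pairs with, say, transcendental structure constants is exactly what is in question. (Zariski density of $\ZZ^d$ in $V$ does let you promote the inverse identities to polynomial identities on $V$, but that only gives injectivity over $\RR$, not surjectivity.) The ``reassembly'' route assumes the parametrizations are compatible with products of rings because ``the representation $V$ decomposes accordingly,'' but $V$ admits no direct-sum decomposition matching $R\cong\RR^{n-2i}\times\CC^i$: for $n=4$, a pair of ternary quadratic forms cutting out two real and two complex-conjugate points of $\mathbb{P}^2$ is not a product of smaller such data in any evident sense, and making this precise is essentially the entire content of the field-coefficient theorem. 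To close the gap you must either re-run the proofs of \cite{DF,III,IV} verbatim over the field $\RR$ (which is what the paper invokes), or use Galois descent from $\CC$, where every nondegenerate orbit is split, as in \cite{WY}; your outline needs to commit to one of these rather than treat the surjectivity as formal. A smaller point: uniqueness of the resolvent is also not ``built in'' to the integral statement (orders can have several resolvents over $\ZZ$); the paper handles it by direct inspection of the finitely many real orbits.
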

Theorems~\ref{par} and \ref{par2} are compatible with each other under the inclusion $V_\ZZ\subset V_\RR$: if the ring associated to $v\in V_\ZZ$ via Theorem~\ref{par} is $R$, then the ring associated to $v$ under Theorem~\ref{par2} is $R\otimes\RR$.
 Explicitly, the multiplication tables of the algebras $R$ and $S$ with respect to the bases $\alpha$ and $\beta$, for a vector $v\in V$, are given by the same integer polynomial formulas in the coordinates of $v$ in the case of either Theorem~\ref{par} or \ref{par2}, namely, by \cite[\S 15 (1) and (2)]{DF} when $n=3$, by \cite[(14), (21), (22), and (23)]{III} when $n=4$, and by \cite[(16), (17), (21), and (22)]{IV} when $n=5$.

Now a nondegenerate ring $R$ of rank $n$ over $\RR$ must be a product of field extensions of~$\RR$; thus
$R\cong \RR^r\times\CC^s$ for some $r,s$ with $r+2s=n$.  Hence there are two nondegenerate orbits of $G_\RR$ on $V_\RR$ when $n=3$, three such orbits of $G_\RR$ on $V_\RR$ when $n=4$, and three such orbits of $G_\RR$ on $V_\RR$ when $n=5$, corresponding to the rings over $\RR$ given by
\begin{equation}\label{realringlist}
\RR^3, \; \RR\times\CC; \;\; \RR^4, \; \RR^2\times\CC, \; \CC^2; \;\; \RR^5, \; \RR^3\times\CC, \; \RR\times \CC^2,
\end{equation}
respectively. 
An explicit computation with these eight nondegenerate orbits arising in Theorem~\ref{par2}, or an elementary group theory argument, then shows that
 the corresponding quadratic, cubic, and sextic resolvent rings of the rings in (\ref{realringlist}) are given by
 \begin{equation}\label{resringlist}
 \RR^2, \; \CC; \;\; \RR^3, \; \RR\times\CC, \; \RR^3; \;\; \RR^6, \; \CC^3, \; \RR^2\times \CC^2,
 \end{equation}
 respectively.
  

 Since a nondegenerate element of $v\in V_\RR$ determines a
 nondegenerate ring $R\cong \RR^r\times\CC^s$ for some $r,s$ with
 $r+2s=n$ (together with an $\RR$-basis $\alpha$ of $R/\RR$), we
 obtain a positive definite quadratic form $q_v$, defined by
 (\ref{qdef1}), on the trace zero $\RR$-subspace of $R$; here
 $\sigma_1,\ldots,\sigma_r$ and $\tau_1,\tau_1',\ldots,\tau_s,\tau_s'$
 denote as before the ring homomorphisms from $R$ to $\RR$ and the
 complex conjugate pairs of ring homomorphisms from $R$ to $\CC$,
 respectively.  The unique lift of the basis $\alpha$ to a basis of
 the trace zero subspace of $R$ makes $q_v$ an $(n-1)$-ary quadratic
 form.  By the last sentence of Theorem~\ref{par2}, this map $v\mapsto
 q_v$ from nondegenerate elements of $V_\RR$ to nondegenerate
 $(n-1)$-ary quadratic forms over $\RR$ is equivariant with respect to
 the action of $\gl_{n-1}(\RR)$ (and thus also equivariant with
 respect to the action of the subgroup $\gl_{n-1}(\ZZ)$).  \

 Finally, suppose $v\in V_\ZZ\subset V_\RR$ corresponds via
 Theorem~\ref{par} to the ring of integers $\OO_K$ of a number field
 $K$, together with a $\ZZ$-basis $\alpha$ of $\OO_K/\ZZ$.  Then by
 the compatibility of Theorems~\ref{par} and \ref{par2}, we see that
 the $(n-1)$-ary quadratic form $q_v$ defined in the previous
 paragraph is the same (up to a factor of $n$) as the quadratic form
 on the trace zero part of $\ZZ+n\OO_K$ defined in the introduction
 (with respect to the unique lift of the basis $n\alpha$ to a basis of
 the trace zero part of $\ZZ+n\OO_K$).  Therefore, if $v\in V_\ZZ$
 corresponds via Theorem~\ref{par} to the ring of integers~$\OO_K$ in
 a number field $K$, then the quadratic form $q_v$---as defined above
 for all vectors in $V_\RR$ using Theorem~\ref{par2}---gives the shape
 of $\OO_K$.

\section{Counting}\label{sec:3}

For $i\in\{ 0, 1, \ldots, \lfloor n/2 \rfloor \},$ let $V_\RR^{(i)}$
denote the subset of $V_\RR$ such that in the corresponding pair
$(R,S)$, the ring $R\otimes\RR$ of rank $n$ over $\RR$ is isomorphic
to $\RR^{n-2i} \times \mathbb{C}^i.$ Then $V_\RR^{(0)}, V_\RR^{(1)},
\ldots, V_\RR^{(\lfloor n/2 \rfloor)}$ are the nondegenerate orbits of
$G_\RR$ on $V_\RR$.  

The representation of $G_\RR=\gl_{n-1}(\RR)\times\gl_{r-1}(\RR)$ on $V_\RR$ is not faithful---indeed, the kernel is infinite.  The action of $G_\RR$ on $V_\RR$ 
factors through that of $G_\RR'=
\mathbb{G}_m(\RR)\times \gl^{\pm1}_{n-1}(\RR) \times \gl^{\pm1}_{r-1}(\RR)$ (where $\mathbb G_m$ acts by scalar multiplication), via $$(g_{n-1},~g_{r-1})\mapsto 
\begin{cases} 
\left(|\!\det g_{n-1}|^{3/(n-1)} |\!\det g_{r-1} |^{1/(r-1)},~g_{n-1}^\prime,~g_{r-1}^\prime\right) & \text{if $n=3$,}
\\ 
\left(|\!\det g_{n-1}|^{2/(n-1)} |\!\det g_{r-1} |^{1/(r-1)},~g_{n-1}^\prime,~g_{r-1}^\prime\right) & \text{if $n=4$,}
\\ 
\left(|\!\det g_{n-1}|^{1/(n-1)} |\!\det g_{r-1} |^{2/(r-1)},~g_{n-1}^\prime,~g_{r-1}^\prime\right) & \text{if }n=5, 
\end{cases}
$$
where $g_i^\prime$ is given by $g_i= |\!\det g_i|^{1/i} g_i^\prime;$ here for a matrix group $G$, we use $G^{\pm1}$ to denote the subgroup of $G$ consisting of those elements having determinant
$\pm1$.  
The orbits of $G_\RR$ on $V_\RR$ are the same as the orbits of $G_\RR'$ on $V_\RR$, and the orbits of $G_\ZZ$ on $V_\ZZ$ are the same as the orbits of $G_\ZZ'$ on $V_\ZZ$,
where $G_\ZZ'=\mathbb{G}_m(\ZZ)\times \gl^{\pm1}_{n-1}(\ZZ) \times \gl^{\pm1}_{r-1}(\ZZ)$.  Furthermore, it is easy to see that the kernel of the representation of $G_\RR'$ (or $G_\ZZ'$) on $V_\RR$ is now finite and in fact of size 4.  In particular, the action of $G_\RR'$ (and thus $G_\ZZ'$) on $V_\RR$ has generically finite stabilizers.   Hence, in the sequel, it will often be convenient to refer to the actions of $G_\RR'$ and $G_\ZZ'$ on $V_\RR$ rather than those of $G_\RR$ and $G_\ZZ$, especially in situations where the finiteness of the stabilizer is important. 

For $i \in \{ 0, 1, \ldots, \lfloor n/2 \rfloor
\},$ let $v^{(i)} \in V_\RR^{(i)}$ be any fixed vector whose
associated shape $q(v^{(i)})$ in $\mc{S}_{n-1}$
is $\left[\begin{smallmatrix} 1\! & & \\[-.07in] & \ddots & \\ & & \!1
    \\ \end{smallmatrix}\right]$.  To obtain such a $v^{(i)}$, choose
any $w^{(i)} \in V_\RR^{(i)}$; its shape is some positive definite
$(n-1)$-ary quadratic form $q(w^{(i)})$.  Since $\gl_{n-1}(\RR)$ acts
transitively on shapes in $\mc{S}_{n-1}$, there exists $\gamma \in
\gl_{n-1}(\RR)$ such that $\gamma \cdot q(w^{(i)}) =
\left[\begin{smallmatrix} 1\! & & \\[-.07in] & \ddots & \\ & & \!1
    \\ \end{smallmatrix}\right]$.  Let $v^{(i)} = \gamma \cdot
w^{(i)}$, where we view $\gl_{n-1}(\RR)
=\gl_{n-1}(\RR)\times\{1\}\subset G_\RR; $ then $q(v^{(i)}) = q(\gamma
\cdot w^{(i)}) = \gamma \cdot q(w^{(i)}) = \left[\begin{smallmatrix}
    1\! & & \\[-.07in] & \ddots & \\ & & \!1
    \\ \end{smallmatrix}\right]$.  By scaling $v^{(i)}$ by an element
of $\RR^\times$ if necessary, we may furthermore assume that
$|\disc(v^{(i)})|=1$.

Let $4n_i$ denote the cardinality of the stabilizer in $G_\RR'$ of
$v^{(i)} \in V_\RR^{(i)}.$ Let $\mc{F}\subset G_\RR'$ be a fundamental
domain for the left action of $G_\ZZ'$ on $G_\RR'$. (For instance, we
may take $\mc{F}$ to lie in a standard Siegel set; see
\cite{BH,DD4,DD5} for details.) Since the kernel of the representation of either $G_\RR'$ or $G_\ZZ'$ on $V_\RR$ has size 4, we see that $\mc{F}v^{(i)}$, viewed as a
multiset, is the union of $n_i$ fundamental domains for the action of
$G_\ZZ'$ on $V_\RR^{(i)}$.

Let $V_\ZZ^{(i)} := V_\RR^{(i)} \cap V_\ZZ$, and for any $G_\ZZ$-invariant subset $S
\subset V_\ZZ^{(i)}$, let $N(S;X)$ denote the number of 
$G_\ZZ$-orbits of 
irreducible elements 
$x\in V_\ZZ^{(i)}$ such that $|\disc(x)| < X.$ For
any nice subset $W \subset \mc{S}_{n-1}$, let $N(S; X, W)$ denote the
number of 
$G_\ZZ$-orbits of 
irreducible elements 
$x\in V_\ZZ$ such that
$|\disc(x)| < X$ and the shape $q(x)$ of $x$ is in $W.$

Let $\mc{R}_X := \{ x\in \mc{F}v^{(i)} : |\disc(x)| < X\}$, and
$\mc{R}_{X,W} := \{ x \in \mc{F}v^{(i)} : |\disc(x)| < X \mbox{ and }
q(x) \in W \}$, and let $\vol(\mc{R}_X)$ (respectively
$\vol(\mc{R}_{X,W})$) denote the Euclidean volume of $\mc{R}_X$
(respectively $\mc{R}_{X,W}$) as a multiset.

In \cite{DavI, DavII, DD4, DD5}, it is shown that (noting the slightly
different notions of irreducibility in these references that only
differ on negligible sets and thus do not matter here; see
(\cite[p.~183]{DavI}, \cite[p.~1037]{DD4}, \cite[p.~1583]{DD5}):
\begin{theorem}\label{thm:3}
$N(V_\ZZ^{(i)};X) = \displaystyle{\frac1{n_i}}\vol(\mc{R}_X) + o(X) = \displaystyle{\frac1{n_i}}\vol(\mc{R}_1)\cdot X + o(X).$
\end{theorem}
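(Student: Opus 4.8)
The plan is to deduce Theorem~\ref{thm:3} directly from the counting results of \cite{DavI,DavII,DD4,DD5}, which assert precisely that the number of $G_\ZZ$-orbits of irreducible integral points of discriminant less than $X$ lying in $V_\ZZ^{(i)}$ is asymptotic to $\frac{1}{n_i}\vol(\mc{R}_X)$, where $\mc{R}_X$ is the truncated fundamental domain $\{x \in \mc{F}v^{(i)} : |\disc(x)| < X\}$ thought of as a multiset. First I would recall the structure of those arguments: one counts lattice points in $\mc{F}v^{(i)}$ of bounded discriminant by averaging over a compact set of translates of the fundamental domain and applying a lattice-point-counting principle (Davenport's lemma), which converts the point count into the Euclidean volume $\vol(\mc{R}_X)$; the cusp of $\mc{F}$ requires the more delicate arguments of \cite{DavI,DavII} for $n=3$, of \cite{DD4} for $n=4$, and of \cite{DD5} for $n=5$ to show that the number of irreducible points in the cuspidal region is $o(X)$. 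The factor $1/n_i$ arises because, as explained just before the theorem, $\mc{F}v^{(i)}$ as a multiset is the union of $n_i$ fundamental domains for the action of $G_\ZZ'$ on $V_\RR^{(i)}$ (equivalently, each $G_\ZZ$-orbit is counted $n_i$ times in $\mc{F}v^{(i)}$, up to the size-$4$ kernel which has already been absorbed into the definition of $4n_i$ and $\mc{R}_X$ as a multiset).

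The one point requiring care is the discrepancy in the notion of ``irreducible'' between this paper and the references: in \cite{DavI,DD4,DD5} irreducibility is defined in a slightly different (representation-theoretic) way, and here it is defined via the associated ring $R$ being an order in an $S_n$-field. I would argue, as the parenthetical remark in the statement already indicates, that the two notions differ only on a set of vectors whose contribution to both sides is negligible: the set of $x$ with $|\disc(x)|<X$ that are irreducible in one sense but not the other is contained in the union of the reducible locus and the locus where $R$ is an order in a non-$S_n$ field, both of which are known to contain $O(X)$ points with an error that is in fact $o(X)$ after removing the principal term, or at worst $O(X^{1-\delta})$; citing the page references \cite[p.~183]{DavI}, \cite[p.~1037]{DD4}, \cite[p.~1583]{DD5} suffices to justify that the principal term $\frac{1}{n_i}\vol(\mc{R}_1)\cdot X$ is unchanged and the error remains $o(X)$.

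Finally, the second equality, $\vol(\mc{R}_X) = \vol(\mc{R}_1)\cdot X + o(X)$, is a homogeneity computation: the group $G_\RR'$ contains the scaling torus $\mathbb{G}_m(\RR)$, and $\disc$ is homogeneous of degree $d$ under the overall scaling while $V_\RR$ has dimension $d$; choosing $v^{(i)}$ with $|\disc(v^{(i)})|=1$ and writing a general element of $\mc{F}v^{(i)}$ in terms of the $\mathbb{G}_m$-coordinate $\lambda$ and a transversal, one finds that dilating the discriminant bound by a factor of $X$ corresponds to dilating $\lambda$ (and hence each of the $d$ linear coordinates by $\lambda^{1/d}$-type scalings) so that $\vol(\mc{R}_X)$ scales linearly in $X$. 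The boundary terms are negligible because $\partial \mc{R}_X$ has measure zero, being cut out by the smooth hypersurface $|\disc| = X$ together with the boundary of the Siegel-type fundamental domain $\mc{F}$. The main obstacle, such as it is, is purely expository: making precise the matching of irreducibility conventions and confirming that the cuspidal estimates in the three cited families of papers are genuinely uniform enough to be quoted as a single statement; the geometry-of-numbers input itself is entirely contained in the references.
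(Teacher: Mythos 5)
Your proposal is correct and follows essentially the same route as the paper, which proves Theorem~\ref{thm:3} simply by citing the counting results of \cite{DavI,DavII,DD4,DD5} and noting (with the same page references you give) that the differing notions of irreducibility affect only a negligible set. Your additional elaboration of the internal structure of those references (Davenport's lemma, the cusp estimates, the homogeneity argument for $\vol(\mc{R}_X)=\vol(\mc{R}_1)\cdot X$) is accurate but not something the paper itself spells out.
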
 
In fact, the number of lattice points in $\mc{R}_X$ is $\gg
\vol(\mc{R}_X)$ as $X \rightarrow \infty$, but when only counting the
\emph{irreducible} lattice points, the number of such irreducible
points is $\sim \vol(\mc{R}_X)$ as $X \rightarrow \infty.$ This is an
important point and crucial to our proof of Theorem~\ref{thm:main}.

We use Theorem \ref{thm:3} in order to prove the following refinement:

\begin{theorem}\label{thm:4}
$N(V_\ZZ^{(i)};X,W) = \displaystyle{\frac1{n_i}}\vol(\mc{R}_{X,W}) + o(X) = \displaystyle{\frac1{n_i}}\vol(\mc{R}_{1,W})\cdot X + o(X).$
\end{theorem}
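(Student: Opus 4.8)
The plan is to re-run the geometry-of-numbers count behind Theorem~\ref{thm:3}, carrying the shape along. First one reduces to a statement about lattice points: since the shape $q(x)$ of $x=g\cdot v^{(i)}$ depends only on the $\gl_{n-1}(\RR)$-component of $g$, the condition ``$q(x)\in W$'' is $G_\ZZ$-invariant and is compatible with the multiplicity-$n_i$ structure of the multiset $\mc{F}v^{(i)}$. Hence, exactly as Theorem~\ref{thm:3} is derived from the lattice-point counts of \cite{DavI,DavII,DD4,DD5},
\[
N(V_\ZZ^{(i)};X,W)=\tfrac1{n_i}\cdot\#\bigl\{\text{irreducible }x\in\mc{R}_{X,W}\cap V_\ZZ\bigr\},
\]
counted with multiplicity. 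Because $\mc{F}$ contains the full scaling factor $\mathbb{G}_m(\RR)^{>0}$ and $\disc$ is homogeneous of degree $d=\dim V$, the region $\mc{R}_{X,W}$ is the homogeneous dilate $X^{1/d}\,\mc{R}_{1,W}$, so $\vol(\mc{R}_{X,W})=X\cdot\vol(\mc{R}_{1,W})$ and it suffices to prove $\#\{\text{irreducible }x\in\mc{R}_{X,W}\cap V_\ZZ\}=X\cdot\vol(\mc{R}_{1,W})+o(X)$.

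Next one splits $\mc{F}v^{(i)}$ into a bounded ``main body'' $\mc{B}_T$ (the part of the Siegel set with all toral coordinates below a cutoff $T$) and its complement, the ``cusp'' $\mc{B}_T^{c}$. On the main body, intersecting with $\{q(x)\in W\}$ cuts out the locus lying over $W$; since the map from the Siegel set to $\mc{S}_{n-1}$ recording the shape is, on the relevant range, a fibration by compact sets (the $\go_{n-1}(\RR)$-orbits), the boundary of $\mc{B}_T\cap\mc{R}_{1,W}$ lies in the preimage of $\partial W$ together with the boundary surfaces ($|\disc|=1$ and the cutoff surfaces) already handled in the unrestricted case, and since $\mu(\partial W)=0$ this whole boundary has measure zero. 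A Davenport-type count on the bounded region $X^{1/d}(\mc{B}_T\cap\mc{R}_{1,W})$ then gives $\#\{x\in\mc{B}_T\cap\mc{R}_{X,W}\cap V_\ZZ\}=X\cdot\vol(\mc{B}_T\cap\mc{R}_{1,W})+o_T(X)$, and the number of these lattice points that are \emph{reducible} is $o_T(X)$ by the same argument that controls reducible points in the main body in \cite{DavI,DavII,DD4,DD5} (intersecting with $\{q\in W\}$ only decreases it). On the cusp, one uses $\mc{B}_T^{c}\cap\mc{R}_{X,W}\subseteq\mc{B}_T^{c}\cap\mc{R}_X$, so the number of irreducible lattice points there is at most the corresponding number for $\mc{R}_X$, which the cusp analysis underlying Theorem~\ref{thm:3} bounds by $o_T(X)$. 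Combining the two ranges and letting $T\to\infty$ slowly with $X$ — so that $\vol(\mc{B}_T^{c}\cap\mc{R}_1)\to0$ and hence $\vol(\mc{B}_T\cap\mc{R}_{1,W})\to\vol(\mc{R}_{1,W})$ — yields the theorem.

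Equivalently, this can be organized so that only a lower bound for compact shape regions is needed: once one has $\liminf_{X}N(V_\ZZ^{(i)};X,W')/X\ge\tfrac1{n_i}\vol(\mc{R}_{1,W'})$ for every compact $W'$ with $\mu(\partial W')=0$, the general case follows formally by approximating $W$ from inside by such $W'$ and applying Theorem~\ref{thm:3} to $\mc{S}_{n-1}\setminus W$ to obtain the matching upper bound for $\limsup_X N(V_\ZZ^{(i)};X,W)/X$; for compact $W'$ the residual $\gl_{n-1}$-cusp is excluded outright, which slightly simplifies the main-body step.

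The main obstacle is the compatibility of the cusp analysis of \cite{DavI,DavII,DD4,DD5} with the shape restriction: one must verify that the two assertions ``reducible points in the main body are negligible'' and ``irreducible points in the cusp are negligible'' — which together force the count of irreducible points to match the volume rather than the (strictly larger) total count of lattice points in $\mc{R}_X$ — survive intersecting with $\{q(x)\in W\}$. This holds because the shape is a function of the $\gl_{n-1}(\RR)$-coordinate alone: the condition is preserved by the group-theoretic reductions used (in particular the averaging over a compact subset of $G_\RR'$ that smooths the fundamental domain), and the estimates in question are upper bounds for counts of integer points in explicitly described regions, which can only improve upon restriction. The remaining ingredients — homogeneity of $\mc{R}_{X,W}$, the measure-zero boundary arising from $\mu(\partial W)=0$, and the monotone passage $T\to\infty$ — are routine.
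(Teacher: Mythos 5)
Your argument is correct, and your closing reorganization is in fact the paper's proof: the paper establishes only the lower bound $N(V_\ZZ^{(i)};X,W)\geq \frac1{n_i}\vol(\mc{R}_{1,W})\cdot X+o(X)$ by approximating $W$ from inside by bounded $W'$ with boundary of measure zero (so that $\mc{R}_{1,W'}$ is bounded and a single counting statement, Lemma~\ref{lem:5}, applies), runs the identical argument for $\overline{W}=\mc{S}_{n-1}\setminus W$, adds the two lower bounds, and observes that Theorem~\ref{thm:3} forces equality in both. The payoff of that organization is precisely what you identify as the ``main obstacle'' of your primary route: one never has to re-open the main-body/cusp decomposition of \cite{DavI,DavII,DD4,DD5} or verify that its estimates (the smoothing average, the uniformity in the cutoff $T$, and in particular the $\gl_{r-1}$-cusp, which the shape condition does not see) survive intersection with $\{q(x)\in W\}$. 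Your primary route is also viable --- as you say, the cusp and reducibility estimates are upper bounds for lattice-point counts in explicit regions and can only improve under restriction --- but it is strictly more work and more reference-dependent; the complement trick converts the hard ``no irreducible mass escapes into the cusp'' half of the asymptotic into a formal consequence of the already-proved unrestricted count. One small caveat on your primary route: Davenport's lemma in its $O(z^{d-1})$ form requires semialgebraic (or boundedly parametrizable) boundaries, whereas the preimage of $\partial W$ is merely of measure zero, so only the $o(X)$ Riemann-sum version is available in general --- which is all the theorem claims, and all the paper's Lemma~\ref{lem:5} asserts.
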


\begin{proof}  
	
	We will require the following lemma. 
		\begin{lemma}\label{lem:5}
                  If $H$ is any bounded measurable set in $V_\RR$,
                  then the number of irreducible lattice points in
                  $zH$ is $\vol(zH) + o(z^d)$ as $z \rightarrow
                  \infty$ $($i.e., reducible lattice points become
                  negligible as $z$ goes to infinity$)$.
		\end{lemma}

		\begin{proof}
                  Without the irreducibility conditions, this
                  statement is just the theory of Riemann integration,
                  though it can also be deduced via Davenport's Lemma
                  \cite{Dav}, which improves the $o(z^d)$ to
                  $O(z^{d-1}).$ Thus to obtain the lemma, it remains
                  to show that the reducible lattice points have
                  density 0 among all lattice points in $zH$ as
                  $z\to\infty$.  To show this, we may use the fact
                  that, if a lattice point in $V_\ZZ$ is reducible,
                  then it must satisfy various congruence conditions;
                  it then suffices to show that the density of points
                  satisfying all these congruence conditions is 0.
                  This follows in the case $n=5$ from \cite[\S
                  2.4]{DD5}; the cases $n=3$ and $n=4$ can be handled
                  in the identical manner (using the fact that $S_3$
                  is generated by any $2$-cycle and $3$-cycle, and
                  $S_4$ is generated by any $3$-cycle and $4$-cycle).
                  A more quantitative version of the density 0
                  statement for $n\in\{3,4\}$ can be deduced from
                  \cite[\S\S4,5]{DavI} and \cite[\S 2.4]{DD4}; this
                  allows us to replace the $o(z^d)$ term in the lemma
                  by $O(z^{d-1})$ in these cases.
		\end{proof}

                Let $\mc{R}'_{1,W}$ be a bounded, measurable subset of $\mc{R}_{1,W}$ such that 
                $\vol(\mc{R}'_{1,W}) \geq \vol(\mc{R}_{1,W}) -
                \epsilon,$ and let $\mc{R}'_{X,W} := X^{1/d} \cdot \mc{R}'_{1,W}$. 
                Lemma \ref{lem:5} then implies that
                the number of irreducible
                lattice points in $\mc{R}'_{X,W}$ is equal to
                $\displaystyle{\frac1{n_i}}\vol(\mc{R}'_{1,W})\cdot X
                + o(X)$. Therefore $N(V_\ZZ^{(i)}; X, W) \geq
                \#\{\mbox{irreducible lattice points in }\mc{R}'_{X,W}\} \geq
                \displaystyle{\frac1{n_i}}(\vol(\mc{R}_{1,W}) -
                \epsilon)\cdot X + o(X).$ Since this is true for any
                $\epsilon$, we conclude that
\begin{equation} \label{eqn:1}
N(V_\ZZ^{(i)}; X,W) \geq \displaystyle{\frac1{n_i}}\vol(\mc{R}_{1,W}) \cdot X + o(X).
\end{equation}

Let $\overline{W} = \mc{S}_{n-1} - W.$  Running the same argument above with $\overline{W}$ in place of $W$, we have
\begin{equation}\label{eqn:2}
N(V_\ZZ^{(i)}; X, \overline{W}) \geq \displaystyle{\frac1{n_i}}\vol(\mc{R}_{1, \overline{W}}) \cdot X + o(X).
\end{equation}

Adding \eqref{eqn:1} and \eqref{eqn:2} , we obtain $$N(V_\ZZ^{(i)}; X, W) + N(V_\ZZ^{(i)}; X, \overline{W}) \geq \displaystyle{\frac1{n_i}}\vol(\mc{R}_{1, W})\cdot X + \displaystyle{\frac1{n_i}}\vol(\mc{R}_{1, \overline{W}})\cdot X + o(X),$$ 
i.e., 
\begin{equation}
\label{eqn:3}
N(V_\ZZ^{(i)}; X) \geq \displaystyle{\frac1{n_i}}\vol(\mc{R}_1) \cdot X + o(X).
\end{equation}

However, by Theorem \ref{thm:3}, we have equality in \eqref{eqn:3}, and therefore must also have equality in \eqref{eqn:1} and \eqref{eqn:2}. This concludes the proof of Theorem \ref{thm:4}.
\end{proof}

In order to prove equidistribution, we require the following result whose proof we defer to \textsection\ref{sec:Volumes}.

\begin{theorem}\label{thm:6}
For $n\in\{3,4,5\}$, we have
$$\frac{\vol(\mc{R}_{1, W})}{\vol(\mc{R}_1)} = \frac{\mu(W)}{\mu(\mc{S}_{n-1})}.$$
  \end{theorem}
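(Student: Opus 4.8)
The plan is to compute both volumes by slicing the region $\mc{R}_1$ according to the shape of the vector, exploiting the $\gl_{n-1}(\RR)$-equivariance of the map $v\mapsto q(v)$ established in Section~2. Recall $\mc{R}_1 = \{x\in\mc{F}v^{(i)} : |\disc(x)|<1\}$, where $\mc{F}$ is a fundamental domain for $G_\ZZ'\backslash G_\RR'$, and that $G_\RR' = \mathbb{G}_m(\RR)\times\gl^{\pm1}_{n-1}(\RR)\times\gl^{\pm1}_{r-1}(\RR)$ acts on $V_\RR^{(i)}$ with finite (size-$4n_i$) stabilizer of $v^{(i)}$. The first step is to change variables from $x\in\mc{F}v^{(i)}$ to the group element $g=(\lambda,g_{n-1},g_{r-1})\in\mc{F}$ with $x=g\cdot v^{(i)}$; since the action is (up to the finite kernel) free, the Euclidean measure on $V_\RR^{(i)}$ pulls back to a fixed Haar measure on $G_\RR'$ times a Jacobian factor that depends only on $\lambda$ and the determinants (it is a product of a power of $\lambda$ and the modular-type factors coming from the representation). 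Because $|\disc(g\cdot v^{(i)})| = \lambda^{d}|\disc(v^{(i)})| = \lambda^d$ by homogeneity of the discriminant under the $\mathbb{G}_m$-scaling, the condition $|\disc(x)|<1$ becomes simply $\lambda<1$ (with $\lambda>0$), decoupling the $\mathbb{G}_m$-variable from everything else.

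Next I would observe that the shape $q(x) = q(g\cdot v^{(i)})$ depends, by equivariance, only on the image of $g_{n-1}$ in $\gl_{n-1}(\ZZ)\backslash\gl_{n-1}(\RR)/\go_{n-1}(\RR) = \mc{S}_{n-1}$: indeed $q(g\cdot v^{(i)}) = g_{n-1}\cdot q(v^{(i)}) = g_{n-1}\cdot\mathrm{Id}$, which is exactly the point of $\mc{S}_{n-1}$ represented by $g_{n-1}$, and the $\mathbb{G}_m$- and $\gl_{r-1}$-parts act trivially on the shape. Therefore, writing the Haar measure on $G_\RR'$ as a product of measures on the three factors, and the measure on $\gl^{\pm1}_{n-1}(\RR)$ (or on $\gl_{n-1}(\RR)$ after reincorporating its determinant, which was absorbed into $\lambda$) as the product of the measure $\mu$ on $\mc{S}_{n-1}$ with Haar measure on $\go_{n-1}(\RR)$, we get that $\vol(\mc{R}_{1,W})$ factors as $\mu(W)$ times a quantity $C_i$ that is independent of $W$ — namely the $\lambda$-integral $\int_0^1 (\text{Jacobian})\,d\lambda$ times the total $\go_{n-1}(\RR)$-volume times the appropriate volume of the fundamental-domain region in the $\gl^{\pm1}_{r-1}$-factor. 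The very same computation with $W$ replaced by all of $\mc{S}_{n-1}$ gives $\vol(\mc{R}_1) = \mu(\mc{S}_{n-1})\cdot C_i$, and dividing yields the theorem, with the constant $C_i$ (and in particular all the Jacobian bookkeeping) cancelling.

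The main obstacle is making the measure-disintegration rigorous: one must check that $\mc{F}v^{(i)}$, as a multiset, really does fiber over $\mc{S}_{n-1}$ compatibly with a genuine product decomposition of Haar measure, that the Jacobian of $g\mapsto g\cdot v^{(i)}$ genuinely depends only on $\lambda$ (and discrete data), and that the fundamental-domain set $\mc{F}$ — which lives in a Siegel set and is not itself a product — can be replaced, for the purpose of this volume computation, by something with the product structure, using that $G_\ZZ'$ acts on the $\gl_{n-1}$-factor through $\gl_{n-1}(\ZZ)$ and the quotient by the stabilizer is handled by the factor $n_i$. Concretely I would fix a section $\mc{S}_{n-1}\to\gl_{n-1}(\RR)$, write every $g\in G_\RR'$ uniquely as (section point)$\times$(element of $\go_{n-1}$)$\times$(the $\mathbb{G}_m$ and $\gl^{\pm1}_{r-1}$ parts), push forward Lebesgue measure, and verify the integrand splits. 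This is routine measure theory once set up, but it is the only place where care is genuinely needed; the homogeneity facts ($\disc$ scales by $\lambda^d$, $q$ is $\gl_{n-1}$-equivariant) are already in hand from Section~2, and the explicit values of $n_i$, of $C_i$, and of the Jacobian never enter the final ratio. I would carry this out uniformly in $n\in\{3,4,5\}$, since the only $n$-dependence is in the exponents appearing in the map $G_\RR\to G_\RR'$ and in $d$ and $r$, none of which survives to the statement.
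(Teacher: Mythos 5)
Your proposal is correct and follows essentially the same route as the paper: change variables from $\mc{F}v^{(i)}$ to the group $G_\RR'$, use that $|\disc(g\cdot v^{(i)})|=\lambda^d$ to decouple the $\mathbb{G}_m$-factor, observe that the shape depends only on the class of $g_{n-1}$ in $\gl_{n-1}(\ZZ)\backslash\gl_{n-1}(\RR)/\go_{n-1}(\RR)$, and cancel all $W$-independent factors in the ratio. The only difference is that the key Jacobian fact you flag as needing verification (that the pullback of Euclidean measure is Haar measure times a function of $\lambda$ alone, equivalently $dg\propto|\disc(v)|^{-1}dv$) is handled in the paper by citing a known proposition from the counting literature rather than proved from scratch.
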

 
 Granting this theorem for the moment, we then obtain 
 
\begin{corollary}
  When irreducible elements in $V_\ZZ^{(i)}$, up to
  $G_\ZZ$-equivalence, are ordered by discriminant, the shapes of
  these elements are equidistributed in $\mc{S}_{n-1} $ with respect
  to $\mu$.
\end{corollary}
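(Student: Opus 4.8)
The plan is to combine Theorems~\ref{thm:4} and \ref{thm:6} directly, and then sum over the relevant orbit components to pass from statements about lattice points in $V_\ZZ$ to statements about number fields. First I would apply Theorem~\ref{thm:4} to both $W$ and its complement $\overline{W}=\mc{S}_{n-1}-W$, obtaining
$$N(V_\ZZ^{(i)};X,W)=\frac1{n_i}\vol(\mc{R}_{1,W})\cdot X+o(X)\quad\text{and}\quad N(V_\ZZ^{(i)};X)=\frac1{n_i}\vol(\mc{R}_1)\cdot X+o(X),$$
where the second follows from Theorem~\ref{thm:3} (or equivalently by adding the two instances of Theorem~\ref{thm:4}). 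Dividing these two asymptotics gives
$$\lim_{X\to\infty}\frac{N(V_\ZZ^{(i)};X,W)}{N(V_\ZZ^{(i)};X)}=\frac{\vol(\mc{R}_{1,W})}{\vol(\mc{R}_1)},$$
and then Theorem~\ref{thm:6} identifies the right-hand side with $\mu(W)/\mu(\mc{S}_{n-1})$. This establishes the equidistribution statement for irreducible $V_\ZZ^{(i)}$-orbits in the single signature component indexed by $i$.

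To finish, I would note that the corollary as stated concerns irreducible elements of $V_\ZZ^{(i)}$ for a fixed $i$, so in fact the argument above \emph{is} the proof: the only thing that needs spelling out is that the $o(X)$ error terms in numerator and denominator are negligible compared to the main term $\frac1{n_i}\vol(\mc{R}_1)\cdot X$, which is legitimate precisely because $\vol(\mc{R}_1)>0$ (the region $\mc{R}_1$ has nonempty interior since $\mc{F}$ is a fundamental domain of positive measure and $v^{(i)}$ is nondegenerate). One should also observe that $\mu(W)$ and $\mu(\overline{W})$ are both finite since $\mu(\mc{S}_{n-1})<\infty$, and that the hypothesis ``$\partial W$ has measure zero'' is exactly what is needed to invoke Theorem~\ref{thm:4} for $W$ (and hence for $\overline{W}$, whose boundary is the same set).

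The main obstacle is not in the corollary itself—given Theorems~\ref{thm:4} and \ref{thm:6} it is a one-line division—but rather lies upstream: Theorem~\ref{thm:6}, the volume-ratio computation, is deferred to Section~\ref{sec:Volumes} and is where the genuine content resides, namely showing that the shape map $v\mapsto q(v)$ pushes the natural measure on a fundamental domain $\mc{F}v^{(i)}$ (restricted to $|\disc|<X$) forward to a multiple of $\mu$ on $\mc{S}_{n-1}$. If one wanted the corollary to also yield Theorem~\ref{thm:main}, the remaining step would be to translate counts of $G_\ZZ$-orbits of irreducible $v\in V_\ZZ^{(i)}$ into counts of isomorphism classes of $S_n$-number fields $K$ with $i$ pairs of complex embeddings: by Theorem~\ref{par} the \emph{maximal} such orbits biject with maximal orders $\OO_K$, and the reduction from all irreducible orders to maximal ones is the sieve carried out in Sections~4 and~5, which preserves the equidistribution of shapes because the local conditions defining maximality are independent of the shape at the archimedean place. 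But for the corollary precisely as stated, none of that is needed, and the proof is simply: apply Theorem~\ref{thm:4} to $W$ and to $\mc{S}_{n-1}$, divide, and invoke Theorem~\ref{thm:6}.
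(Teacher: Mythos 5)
Your proposal is correct and matches the paper's (implicit) argument exactly: the corollary is stated as an immediate consequence of Theorems~\ref{thm:4} and \ref{thm:6}, obtained by dividing the asymptotic for $N(V_\ZZ^{(i)};X,W)$ by that for $N(V_\ZZ^{(i)};X)$ and invoking the volume-ratio identity. Your additional remarks on the positivity of $\vol(\mc{R}_1)$ and the downstream sieve are accurate but not needed for the corollary itself.
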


\section{Congruence Conditions}
Let $S$ be any subset of $V_\ZZ$ defined by \emph{finitely many}
congruence conditions modulo prime powers.  Then in \cite{DH, DD4,
  DD5}, the following congruence version of Theorem \ref{thm:3} is
proven:

\begin{theorem}\label{thm:7}
For $n\in\{3,4,5\}$, we have
$$N(S;X) = \displaystyle{\frac1{n_i}}\prod_p \mu_p(S) \cdot \vol(\mc{R}_1) \cdot X + o(X),$$
where $\mu_p(S)$ denotes the $p$-adic density of $S$ in $V_\ZZ$.
\end{theorem}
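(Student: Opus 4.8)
The plan is to deduce Theorem~\ref{thm:7} from Theorems~\ref{thm:3} and~\ref{thm:4} by the same bootstrapping used to prove Theorem~\ref{thm:4}, applied now to residue classes rather than to constraints on the shape. Since $S$ is defined by finitely many congruence conditions modulo prime powers, there is a positive integer $m$ (a product of the relevant prime powers) such that membership in $S$ depends only on the class of $x$ modulo $m$; so I would write $S\cap V_\ZZ^{(i)}=\bigsqcup_{v\in T}\big((v+mV_\ZZ)\cap V_\ZZ^{(i)}\big)$ for a finite set $T$ of residue classes in $V_\ZZ/mV_\ZZ$, with $|T|=m^{d}\prod_p\mu_p(S)$ by the Chinese Remainder Theorem. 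Since $\mc{F}v^{(i)}$, as a multiset, is a union of $n_i$ fundamental domains for the $G_\ZZ'$-action on $V_\RR^{(i)}$, we have $N(S;X)=\tfrac1{n_i}\sum_{v\in T}N_m(v;X)$, where $N_m(v;X)$ denotes the number of irreducible points of the multiset $\mc{F}v^{(i)}$ that lie in $v+mV_\ZZ$ and have $|\disc|<X$. It then suffices to prove $N_m(v;X)=\tfrac1{m^{d}}\vol(\mc{R}_1)\cdot X+o(X)$ for each class $v$ and to sum over the fixed finite set $T$.

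For a single class I would re-run the geometry-of-numbers input to Theorem~\ref{thm:3} with $V_\ZZ$ replaced by the coset $v+mV_\ZZ$. The affine substitution $x=v+my$ puts the points of $v+mV_\ZZ$ in a dilate $zH$ into bijection with the points of $V_\ZZ$ in $\tfrac1m(zH-v)$, which differs from $\tfrac zm H$ only by a translation negligible as $z\to\infty$; and the union of congruence conditions implied by reducibility (the density-zero input to Lemma~\ref{lem:5}) still cuts out a density-zero set after being intersected with the fixed congruence defining the coset. Hence, for any bounded measurable $H\subseteq V_\RR$, the number of irreducible points of $v+mV_\ZZ$ in $zH$ should be $\tfrac1{m^{d}}\vol(zH)+o(z^{d})$. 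Then, exactly as in the proof of Theorem~\ref{thm:4}, given $\epsilon>0$ I would truncate the cusp of $\mc{R}_1$: choose a bounded measurable $B\subseteq\mc{R}_1$ with $\vol(B)\ge\vol(\mc{R}_1)-\epsilon$, apply the preceding with $H=B$ and $z=X^{1/d}$ (using $\mc{R}_X=X^{1/d}\mc{R}_1$, which holds because of the $\mathbb{G}_m$-direction in $\mc{F}$, and counting in the multiset $\mc{F}v^{(i)}$ as in Theorem~\ref{thm:3}), and conclude $N_m(v;X)\ge\tfrac1{m^{d}}(\vol(\mc{R}_1)-\epsilon)\cdot X+o(X)$; letting $\epsilon\to0$ gives $N_m(v;X)\ge\tfrac1{m^{d}}\vol(\mc{R}_1)\cdot X+o(X)$.

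Summing this over $v\in T$ yields $N(S;X)\ge\tfrac1{n_i}\prod_p\mu_p(S)\cdot\vol(\mc{R}_1)\cdot X+o(X)$. I would then run the identical argument with the complementary residue classes $T'=(V_\ZZ/mV_\ZZ)\setminus T$, of size $m^{d}-|T|$, obtaining $N(S';X)\ge\tfrac{m^{d}-|T|}{n_i m^{d}}\vol(\mc{R}_1)\cdot X+o(X)$ for $S'=V_\ZZ^{(i)}\setminus S$; adding the two bounds and using Theorem~\ref{thm:3} in the form $N(S;X)+N(S';X)=N(V_\ZZ^{(i)};X)=\tfrac1{n_i}\vol(\mc{R}_1)\cdot X+o(X)$ forces equality in both, and the equality for $S$ is Theorem~\ref{thm:7}.

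The hard part is not new here: it is entirely the cusp estimate already needed for Theorem~\ref{thm:3}. In the cusp of $\mc{F}v^{(i)}$ the \emph{reducible} lattice points are abundant (of count $\gg X$), so the technical core of \cite{DH,DD4,DD5} is precisely that the \emph{irreducible} points there are sparse---which is why $N(\,\cdot\,;X)$ counts only irreducible elements. For the congruence refinement itself, the only things to check are that this cusp estimate applies verbatim to any subset of $V_\ZZ$ (a coset being such a subset) and that the main-term count on the truncated region follows from Davenport's Lemma~\cite{Dav} applied to $\tfrac1m(\mc{R}_X-v)$; both are routine since $m$ is fixed. Genuine uniformity in the modulus will be needed only later, for the sieve over maximal orders (with growing moduli) in Section~5.
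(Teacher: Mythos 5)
Your argument is correct, but it is genuinely different from what the paper does: the paper does not prove Theorem~\ref{thm:7} at all. It quotes it from \cite{DH,DD4,DD5}, where the congruence version is obtained by rerunning the entire geometry-of-numbers argument of Theorem~\ref{thm:3} (averaging over the fundamental domain, cusp estimates and all) with $V_\ZZ$ replaced by the finitely many translates of $mV_\ZZ$ that make up $S$. You instead deduce Theorem~\ref{thm:7} formally from Theorem~\ref{thm:3}, by applying to the partition of $V_\ZZ/mV_\ZZ$ into $T$ and its complement exactly the truncate-and-squeeze device that the paper itself only deploys later (for Theorems~\ref{thm:4} and~\ref{thm:9}) with respect to $W$ and $\overline W$. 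Your route has a real advantage: the hard input --- that irreducible points in the cusp are negligible --- is invoked only once, for the full lattice in Theorem~\ref{thm:3}, and the coset counts then need nothing beyond elementary lattice-point counting in bounded Jordan-measurable regions (your coset analogue of Lemma~\ref{lem:5}, which is essentially the paper's Lemma~\ref{lem:8}) together with the observation that reducibility still has density zero inside a fixed coset since only finitely many primes divide $m$. What it gives up is the power-saving error term and any uniformity in the modulus that the direct arguments of \cite{DD4,DD5} provide; as you correctly note, neither is needed here, since the sieve in Section~5 controls the tail by the separate bound $N(W_p;X)=O(X/p^2)$ rather than by uniformity. Two small points to keep at least to the paper's own level of care: choose your truncation $B$ with boundary of measure zero (as the paper does for $W'$), and note that the identification $N(S;X)=\frac{1}{n_i}\sum_{v\in T}N_m(v;X)+o(X)$ rests on the same multiset/fundamental-domain bookkeeping already used in Theorems~\ref{thm:3} and~\ref{thm:4}.
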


We also have the following congruence version of Lemma \ref{lem:5}, whose proof is identical:

\begin{lemma}\label{lem:8}
If H is any bounded measurable subset of $V_\RR$, then the number of irreducible lattice points in $S\cap zH$ is $\displaystyle{\frac1{n_i}}\prod_p \mu_p(S) \cdot \vol(zH) + o(z^d)$ as $z \rightarrow \infty$.
\end{lemma}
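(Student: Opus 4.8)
The plan is to repeat the proof of Lemma~\ref{lem:5} essentially verbatim, carrying along two extra pieces of bookkeeping: the congruence conditions defining $S$, which will produce the factor $\prod_p\mu_p(S)$, and the orbit-normalization constant $1/n_i$ that is already in force throughout Section~\ref{sec:3}. As in Lemma~\ref{lem:5}, I would first reduce to counting without the irreducibility condition. The proof of Lemma~\ref{lem:5} already establishes that every reducible vector in $V_\ZZ$ lies in a set cut out by a family of congruence conditions modulo prime powers of density $0$ (this is \cite[\S2.4]{DD5} for $n=5$, and the identical argument using that $S_3$ is generated by any $2$-cycle and $3$-cycle and $S_4$ by any $3$-cycle and $4$-cycle for $n=3,4$; by \cite[\S\S4,5]{DavI} and \cite[\S2.4]{DD4} this density $0$ set even contains only $O(z^{d-1})$ lattice points of $zH$ when $n\le4$). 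Intersecting that set with the fixed set $S$ cannot increase its density, so the reducible points of $S\cap zH$ still number $o(z^d)$, and I may drop the word ``irreducible'' from the count.

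It then remains to estimate $\#(S\cap zH)$. Since $S$ is defined by finitely many congruence conditions modulo prime powers, it is a union of $k$ cosets of a fixed finite-index sublattice $\Lambda\subseteq V_\ZZ$, and its density in $V_\ZZ$ equals the finite product $\prod_p\mu_p(S)$ (all but finitely many factors being $1$). Applying the theory of Riemann integration---or Davenport's Lemma~\cite{Dav}, which furthermore bounds the error by $O(z^{d-1})$---to each of these $k$ cosets, dilated by the factor $z$, yields $\#(S\cap zH)=\prod_p\mu_p(S)\cdot\vol(zH)+o(z^d)$. Finally, the constant $1/n_i$ enters exactly as in the passage from Lemma~\ref{lem:5} to Theorems~\ref{thm:3} and~\ref{thm:4}, and as in Theorem~\ref{thm:7}: in the intended applications the region $zH$ is $\mc Fv^{(i)}\cap\{|\disc|<z^d\}$, which as a multiset is the union of $n_i$ fundamental domains for the action of $G_\ZZ'$ on $V_\RR^{(i)}$, so the count of irreducible points up to $G_\ZZ$-equivalence is $1/n_i$ times the multiset count just computed, namely $\frac1{n_i}\prod_p\mu_p(S)\cdot\vol(zH)+o(z^d)$, which is the assertion of the lemma.

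Since the argument introduces nothing beyond Lemma~\ref{lem:5}, the only points deserving a moment's care---and hence the ``main obstacle'', such as it is---are two uniformity checks implicit above. First, that intersecting the density $0$ set of reducible vectors with $S$ really keeps density $0$: this is immediate, since a subset of a density $0$ set has density $0$. Second, that the Riemann-sum (or Davenport) error is genuinely $o(z^d)$ with an implied constant independent of $z$: this too is immediate, since $\Lambda$ and the number $k$ of cosets are fixed once $S$ is fixed, so a single implied constant serves all $k$ cosets simultaneously. With these noted, the proof is word for word that of Lemma~\ref{lem:5}, as the text asserts.
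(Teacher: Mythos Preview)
Your first two paragraphs are exactly what the paper means by ``whose proof is identical'': drop the irreducibility condition using the same density-$0$ congruence argument as in Lemma~\ref{lem:5}, then count lattice points in $S\cap zH$ by writing $S$ as a finite union of cosets of a finite-index sublattice and applying Riemann integration (or Davenport's lemma) to each coset. That is the complete argument, and it matches the paper.

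Your third paragraph, however, is confused. You try to justify the factor $1/n_i$ by saying that in applications $zH$ is $\mc F v^{(i)}\cap\{|\disc|<z^d\}$, but this region is \emph{not} bounded (the fundamental domain has a cusp), so it cannot play the role of $H$ in the lemma; indeed, the whole point of Lemma~\ref{lem:5} and its congruence analogue is to handle a bounded approximation $W'\subset W$, with the passage to the unbounded $\mc R_{X,W}$ and the division by $n_i$ happening afterward, in the proof of Theorem~\ref{thm:4} (resp.\ Theorem~\ref{thm:9}). In fact the factor $1/n_i$ does not arise from the argument of Lemma~\ref{lem:5} at all---compare the statement of Lemma~\ref{lem:5}, which has no such factor---and its appearance in the statement of Lemma~\ref{lem:8} is most likely a slip in the paper. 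Your actual lattice-point count gives $\prod_p\mu_p(S)\cdot\vol(zH)+o(z^d)$, and that is what one should use; the $1/n_i$ enters later, exactly as in the passage from Lemma~\ref{lem:5} to Theorem~\ref{thm:4}.
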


Running the same argument in \S \ref{sec:3} with $S$ instead of $V_\ZZ$ (noticing that $S$ is the disjoint union of finitely many translates of lattices), we obtain

\begin{theorem}\label{thm:9}
For $n\in\{3,4,5\}$, we have
\begin{equation}
\label{eqn:4}
N(S; X, W) = \displaystyle{\frac1{n_i}}\prod_p \mu_p(S) \cdot \vol(\mc{R}_{1, W}) \cdot X + o(X).
\end{equation}
\end{theorem}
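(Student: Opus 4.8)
The plan is to re-run the proof of Theorem~\ref{thm:4} essentially verbatim, making three substitutions throughout: $S$ replaces $V_\ZZ$ (so $S\cap V_\RR^{(i)}$ replaces $V_\ZZ^{(i)}$), Lemma~\ref{lem:8} replaces Lemma~\ref{lem:5}, and Theorem~\ref{thm:7} replaces Theorem~\ref{thm:3}. The one new structural point I would verify at the outset is that a set $S$ defined by finitely many congruence conditions modulo prime powers is a finite disjoint union of translates of the single finite-index sublattice $MV_\ZZ\subseteq V_\ZZ$, where $M$ is the product of the moduli involved; this is exactly the hypothesis under which Lemma~\ref{lem:8} and Theorem~\ref{thm:7} are stated, so all the geometry-of-numbers input transfers with no change, with the $p$-adic density $\mu_p(S)$ now appearing where the constant $1$ stood in \S\ref{sec:3}.

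First I would fix $i$ and, given $\epsilon>0$, choose a bounded measurable $W'\subseteq W$ with boundary of measure zero and $\vol(\mc{R}_{1,W'})\geq\vol(\mc{R}_{1,W})-\epsilon$; then $\mc{R}_{1,W'}$ is bounded and $\mc{R}_{X,W'}=X^{1/d}\mc{R}_{1,W'}$ (the same dilation already used in Theorems~\ref{thm:3} and~\ref{thm:4}, using that $\disc$ is homogeneous of degree $d$ and that the shape is scale-invariant). Proceeding exactly as in the proof of Theorem~\ref{thm:4} but invoking Lemma~\ref{lem:8} in place of Lemma~\ref{lem:5} --- which supplies the extra factor $\prod_p\mu_p(S)$ --- and recalling that $\mc{F}v^{(i)}$ is $n_i$ copies of a fundamental domain for $G_\ZZ'$ on $V_\RR^{(i)}$, I obtain
$$N(S;X,W')=\frac1{n_i}\prod_p\mu_p(S)\cdot\vol(\mc{R}_{1,W'})\cdot X+o(X).$$
From $N(S;X,W)\geq N(S;X,W')$ and $\epsilon\to0$ this yields the lower bound $N(S;X,W)\geq\frac1{n_i}\prod_p\mu_p(S)\cdot\vol(\mc{R}_{1,W})\cdot X+o(X)$, and the identical argument with $\overline W=\mc{S}_{n-1}-W$ in place of $W$ gives the matching lower bound for $N(S;X,\overline W)$.

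Adding the two lower bounds, the left side is exactly $N(S;X)$ --- every irreducible point $x$ of $S\cap V_\RR^{(i)}$ with $|\disc(x)|<X$ has shape in precisely one of $W$, $\overline W$ --- and the right side is $\frac1{n_i}\prod_p\mu_p(S)\cdot\vol(\mc{R}_1)\cdot X+o(X)$ since $\vol(\mc{R}_{1,W})+\vol(\mc{R}_{1,\overline W})=\vol(\mc{R}_1)$. Theorem~\ref{thm:7} says this last inequality is an equality, which forces equality in both of the earlier lower bounds; the one for $W$ is exactly \eqref{eqn:4}. I expect the only genuine difficulty to be the one already isolated in Lemma~\ref{lem:8}: the \emph{total} lattice-point count of $S$ in $\mc{R}_{X,W'}$ is $\gg\vol(\mc{R}_{X,W'})$ because of the cusp of the fundamental domain, so it is essential that we count \emph{irreducible} points, and the content of Lemma~\ref{lem:8} (proved just as Lemma~\ref{lem:5}, via the density-zero statements for reducible points in \cite[\S 2.4]{DD5}, \cite[\S\S 4,5]{DavI}, \cite[\S 2.4]{DD4}) is that the reducible points stay negligible even after imposing the congruence conditions cutting out $S$. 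Note that no separate upper-bound argument is needed: the device of summing the one-sided bounds for $W$ and $\overline W$ and comparing with the known total $N(S;X)$ is what upgrades them to equalities.
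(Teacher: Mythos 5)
Your proposal is correct and is essentially identical to the paper's argument: the paper proves Theorem~\ref{thm:9} precisely by "running the same argument in \S\ref{sec:3} with $S$ instead of $V_\ZZ$ (noticing that $S$ is the disjoint union of finitely many translates of lattices)," i.e., substituting Lemma~\ref{lem:8} for Lemma~\ref{lem:5} and Theorem~\ref{thm:7} for Theorem~\ref{thm:3}, exactly as you do. Your write-up simply makes explicit the details the paper leaves to the reader.
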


Again, using Theorem \ref{thm:6} we then obtain the following corollary about equidistribution.

\begin{corollary}
When irreducible elements of $S\cap V_\ZZ^{(i)}$, up to
  $G_\ZZ$-equivalence, are ordered by discriminant, the shapes of these elements are equidistributed in $\mc{S}_{n-1} $ with respect to $\mu$.
\end{corollary}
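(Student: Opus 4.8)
The plan is to read off this corollary as a purely formal consequence of Theorems~\ref{thm:9} and~\ref{thm:6}, with essentially no new work: all of the substance has already been carried out in Sections~\ref{sec:3} and~4. First I would note that in Theorem~\ref{thm:9} the counts are, by construction, counts of $G_\ZZ$-orbits of irreducible elements of $S\cap V_\ZZ^{(i)}$, since the region $\mc{R}_{1,W}\subseteq\mc{F}v^{(i)}$ appearing there lies inside $V_\RR^{(i)}$; and taking $W=\mc{S}_{n-1}$ (whose boundary is empty, hence has measure $0$) recovers
$$N(S;X)=\frac1{n_i}\prod_p\mu_p(S)\cdot\vol(\mc{R}_1)\cdot X+o(X),$$
which is Theorem~\ref{thm:7}.

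Next, assuming $\prod_p\mu_p(S)>0$ --- the only non-vacuous case, since otherwise $S\cap V_\ZZ^{(i)}$ contains only $o(X)$ irreducible orbits of discriminant below $X$ and there is nothing to equidistribute --- I would divide the two asymptotics and take the limit:
$$\lim_{X\to\infty}\frac{N(S;X,W)}{N(S;X)}=\lim_{X\to\infty}\frac{\frac1{n_i}\prod_p\mu_p(S)\,\vol(\mc{R}_{1,W})\,X+o(X)}{\frac1{n_i}\prod_p\mu_p(S)\,\vol(\mc{R}_1)\,X+o(X)}=\frac{\vol(\mc{R}_{1,W})}{\vol(\mc{R}_1)}.$$
Applying Theorem~\ref{thm:6}, the right-hand side equals $\mu(W)/\mu(\mc{S}_{n-1})$ for every nice $W\subseteq\mc{S}_{n-1}$, which is precisely the statement that the shapes $q(x)$ of irreducible $x\in S\cap V_\ZZ^{(i)}$ become equidistributed with respect to $\mu$ as the $x$ are ordered by $|\disc(x)|$.

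I do not expect any real obstacle: the corollary is just a matter of dividing the two asymptotics of Theorem~\ref{thm:9}, the $p$-adic factor $\prod_p\mu_p(S)$ cancelling, followed by an appeal to the volume-ratio identity of Theorem~\ref{thm:6}. The one point worth a word of care is the bookkeeping that Theorem~\ref{thm:9} is already the statement for $S\cap V_\ZZ^{(i)}$ and not merely for $S$; but this is immediate from the discussion preceding it, where $\mc{F}v^{(i)}$ is chosen inside $V_\RR^{(i)}$ and $S$ is written as a finite union of translates of sublattices, so that Lemma~\ref{lem:8} applies verbatim. The genuinely hard inputs --- negligibility of reducible lattice points (Lemmas~\ref{lem:5} and~\ref{lem:8}) and the volume computation of Theorem~\ref{thm:6} --- all lie upstream of this corollary.
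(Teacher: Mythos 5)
Your proposal is correct and matches the paper's (implicit) argument exactly: the corollary is obtained by dividing the asymptotic of Theorem~\ref{thm:9} by that of Theorem~\ref{thm:7} (equivalently, Theorem~\ref{thm:9} with $W=\mc{S}_{n-1}$), cancelling the factor $\frac1{n_i}\prod_p\mu_p(S)$, and invoking Theorem~\ref{thm:6}. Your added remark about the degenerate case $\prod_p\mu_p(S)=0$ is a sensible point of care that the paper leaves unstated.
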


\section{Maximality}
Let $U$ denote the subset of elements of $V_\ZZ$ corresponding to $(R,S)$ where $R$ is a maximal ring of rank $n$, and $U_p$ the subset of  elements in $V_\ZZ$ where $R$ is maximal at $p.$  Then $U= \cap_p U_p$ and is defined by infinitely many congruences modulo prime powers (\cite[\S 2]{DH}, \cite[\S 4.10]{III}, \cite[\S 12]{IV}).  To show that \eqref{eqn:4} holds even for $S=U$, we require the following lemma, which is \cite[\S 4, Proposition 1]{DH}, \cite[Proposition 23]{DD4}, \cite[Proposition 19]{DD5} :

\begin{lemma} Let $W_p = V_\ZZ - U_p$.  Then $N(W_p; X) = O(X/p^2).$
\end{lemma}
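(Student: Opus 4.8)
The plan is to estimate the number of lattice points in $\mc{R}_X$ that correspond to rings $R$ which are non-maximal at a single prime $p$, and to show this quantity is $O(X/p^2)$ uniformly in $p$, matching the bound already recorded for $N(W_p;X)$. First I would note that $W_p = V_\ZZ - U_p$ is a union of finitely many translates of sublattices of $V_\ZZ$ (the conditions defining non-maximality at $p$ occur modulo a bounded power of $p$), so that $W_p$ has a well-defined $p$-adic density $\mu_p(W_p) = O(1/p^2)$; this is essentially the content of the cited propositions from \cite{DH, DD4, DD5}, and it is the input that makes the sieve converge. The point of the lemma as stated here is the \emph{geometric} count: the number of irreducible $G_\ZZ$-orbits of $x \in V_\ZZ$ with $x$ non-maximal at $p$ and $|\disc(x)| < X$ is $O(X/p^2)$, with the implied constant independent of $p$.

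The key steps, in order, are as follows. One applies the averaging/geometry-of-numbers framework of \S\ref{sec:3}: write the count of $G_\ZZ$-orbits as (up to the factor $1/n_i$ and the finite kernel) the number of $W_p$-lattice points in the fundamental region $\mc{R}_X = \{x \in \mc{F}v^{(i)} : |\disc(x)| < X\}$, averaged appropriately over the fundamental domain $\mc{F}$ in a Siegel set. Because $\mc{F}$ lives in a Siegel set, the main region decomposes into a bounded ``main body'' and finitely many ``cuspidal'' directions; on the main body one uses Davenport's Lemma \cite{Dav} to count $W_p$-points, obtaining $\mu_p(W_p)\cdot\vol(\mc{R}_X) + (\text{error})$, and since $\mu_p(W_p) = O(1/p^2)$ and $\vol(\mc{R}_X) = O(X)$ this gives the desired $O(X/p^2)$ from the main term. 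The cuspidal regions are handled exactly as in \cite[Proposition 23]{DD4} and \cite[Proposition 19]{DD5}: in the relevant cusps the coordinates of $x$ that are forced to be divisible by powers of $p$ (because $x$ is reducible or non-maximal there) are precisely the ones with the most room, so one gains the extra factor of $p^{-2}$, while the non-cuspidal contribution from the tail is absorbed into the error term. Finally, one checks uniformity in $p$ of all implied constants, which is automatic since the Siegel-set geometry and Davenport's Lemma are independent of $p$ and only $\mu_p(W_p)$ depends on $p$.

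The main obstacle is the cuspidal analysis. In the main body of the fundamental domain the bound is a soft consequence of Riemann integration plus $\mu_p(W_p) = O(p^{-2})$, but near the cusps of $G_\ZZ'\backslash G_\RR'$ the naive bound $\mu_p(W_p)\cdot\vol$ is not directly available because Davenport's Lemma loses control when the region is long and thin; one must instead use that the sublattice conditions cutting out $W_p$ are compatible with the cusp coordinates in a way that produces genuine savings. This is precisely the technical heart of \cite[Proposition 23]{DD4} and \cite[Proposition 19]{DD5}, and for $n=3$ the analogous estimate is implicit in \cite[\S 4, Proposition 1]{DH} and the discussion of \cite[\S\S4,5]{DavI}. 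Since the lemma is explicitly attributed to those sources, the proof in the paper will simply invoke them, observing that the argument is uniform across $n \in \{3,4,5\}$ in exactly the same way the earlier counting results were; no new idea is needed beyond citing and lightly adapting the established cuspidal bounds.
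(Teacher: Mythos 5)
Your proposal is correct and matches the paper, which gives no proof of this lemma at all but simply attributes it to \cite[\S 4, Proposition 1]{DH}, \cite[Proposition 23]{DD4}, and \cite[Proposition 19]{DD5}, exactly as you anticipate. Your sketch of what those sources actually do (the $p$-adic density bound $\mu_p(W_p)=O(p^{-2})$ combined with a counting estimate uniform in $p$, with the cuspidal regions as the technical heart) is an accurate account of the cited arguments.
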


Let $Y$ be any positive integer.  Then $\cap _{p<Y} U_p$ is defined by finitely many congruence conditions.  So, by Theorem \ref{thm:9}, we have
$$N(\bigcap_{p<Y} U_p; X, W) = \displaystyle{\frac1{n_i}}\prod_{p<Y} \mu_p(U_p) \cdot \vol(\mc{R}_{1,W}) \cdot X + o(X).$$
Then $$N(U; X, W) \leq N(\bigcap_{p<Y} U_p; X, W) = \displaystyle{\frac1{n_i}}\prod_{p<Y} \mu_p(U_p) \cdot \vol(\mc{R}_{1,W}) \cdot X + o(X).$$  Letting $Y$ tend to infinity, we obtain $N(U; X, W) \leq \displaystyle{\frac1{n_i}}\prod_p \mu_p(U_p) \cdot  \vol(\mc{R}_{1,W}) \cdot X + o(X).$  

To obtain the reverse inequality, we note $$ \bigcap_{p<Y} U_p \subset (U \cup \bigcup_{p \geq Y} W_p).$$
Therefore,
\begin{align*}
	N(U; X, W) &\geq N(\bigcap_{p<Y} U_p; X, W) - \displaystyle{\sum_{p\geq Y}} N(W_p; X, W)\\
			&= \displaystyle{\frac1{n_i}}\prod_{p<Y} \mu_p(U_p) \cdot \vol(\mc{R}_{1,W}) \cdot X + o(X) - \displaystyle{\sum_{p\geq Y}} O(X/p^2) \\
			&= \displaystyle{\frac1{n_i}}\prod_{p<Y} \mu_p(U_p) \cdot \vol(\mc{R}_{1,W}) \cdot X + o(X) + O(X)\cdot\displaystyle{\sum_{p \geq Y}} 1/p^2 \\
			&= \displaystyle{\frac1{n_i}}\prod_{p<Y} \mu_p(U_p) \cdot \vol(\mc{R}_{1,W}) \cdot X + o(X) + O(X/Y).
\end{align*}
Letting $Y$ tend to infinity, we obtain $$N(U; X, W) = \displaystyle{\frac1{n_i}}\prod_p \mu_p(U_p) \cdot \vol(\mc{R}_{1,W}) \cdot X + o(X).$$
Together with Theorem \ref{thm:6}, this implies Theorem \ref{thm:main}.

\section[Computation of volumes]{Computation of volumes: Proof of Theorem \ref{thm:6}}\label{sec:Volumes}

In this section, we prove Theorem \ref{thm:6}, namely, that
\begin{equation}\label{volstocompute}
 \frac{\vol(\mc{R}_{1,W})}{\vol(\mc{R}_1)} = \frac{\mu(W)}{\mu(\mc{S}_{n-1})}.
 \end{equation}

To 
calculate the volumes occurring in (\ref{volstocompute}), we note 
that 
since $v^{(i)}$ has shape $\left[\begin{smallmatrix} 1\! & & \\[-.07in]  & \ddots & \\  & & \!1 \\ \end{smallmatrix}\right]$, the shape of $gv^{(i)}$ for $g \in G_\RR$ is simply the image of $g$ in $$\gl_{n-1}(\mathbb{Z}) \backslash \gl_{n-1}(\mathbb{R}) / \go_{n-1}(\mathbb{R}).$$  
Similarly,
the shape of $gv^{(i)}$ for $g \in G_\RR'$ is simply the image of $g$ in $$\gl_{n-1}^{\pm1}(\mathbb{Z}) \backslash \gl^{\pm1}_{n-1}(\mathbb{R}) / \go^{\pm1}_{n-1}(\mathbb{R})\;\cong\;\gl_{n-1}(\mathbb{Z}) \backslash \gl_{n-1}(\mathbb{R}) / \go_{n-1}(\mathbb{R}),$$
where again for any matrix group $G$ we use $G^{\pm1}$ to denote the subgroup of $G$ consisting of those elements having determinant
$\pm1$.  

We use the following proposition, which immediately follows from \cite[Proposition 2.4]{Sh}, \cite[Proposition~21]{DD4}, \cite[Proposition 16]{DD5} via an application of Lebesgue's dominated convergence theorem and the density of the bounded continuous functions in the integrable ones:

\begin{proposition}
For $i \in \{0, 1, \ldots, \lfloor n/2 \rfloor \},$ let $f \in L^1(V_\RR^{(i)}).$  Then there exists a nonzero constant $c_i$ such that $$\int_{g \in G_\RR'} f(g \cdot v^{(i)}) dg \,=\, c_i \cdot \int_{v \in V_\RR^{(i)}} |\disc(v)|^{-1} f(v) dv. $$
\end{proposition}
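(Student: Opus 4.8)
The plan is to prove the final Proposition by exhibiting an explicit change-of-variables formula relating the Haar measure $dg$ on $G_\RR'$ to the Euclidean measure $dv$ on the open orbit $V_\RR^{(i)}$, with the Jacobian equal to a constant times $|\disc(v)|^{-1}$.

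First I would recall the general principle: if an algebraic group $G$ acts on a vector space $V$ with an open orbit $\mathcal{O} = G \cdot v_0$ whose stabilizer is (finite, hence) unimodular, then the pushforward under $g \mapsto g \cdot v_0$ of Haar measure on $G$ is a $G$-relatively-invariant measure on $\mathcal{O}$, hence of the form $\chi(v)\,dv$ for a relatively invariant density $\chi$ whose associated character matches that of $dv$ under the $G$-action. Since the ring of polynomial invariants of $G_\ZZ$ (equivalently $G_\RR$) on $V_\RR$ is generated by $\disc$, which transforms by a power of the determinant characters, and since the Euclidean density $dv$ transforms by the product-of-determinant character of $G_\RR$ on $V_\RR$ (a $d$-dimensional representation), the only relatively invariant density, up to scalar, with the correct transformation law is a constant multiple of $|\disc(v)|^{-1}\,dv$. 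Concretely, I would check that $\disc(g \cdot v) = \chi(g)\,\disc(v)$ for the appropriate rational character $\chi$ of $G_\RR$, and that the Jacobian of $v \mapsto g\cdot v$ on $V_\RR$ equals $|\chi(g)|$ (this is the statement that $\deg \disc = \dim V_\RR = d$ and that the representation has trivial "extra" character beyond the one seen by $\disc$); these two facts together force $|\disc(g\cdot v)|^{-1}\,d(g\cdot v) = |\disc(v)|^{-1}\,dv$, i.e. $|\disc(v)|^{-1}\,dv$ is $G_\RR'$-invariant on each $V_\RR^{(i)}$.

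Given that, the argument concludes quickly. Since $|\disc(v)|^{-1}\,dv$ is a $G_\RR'$-invariant measure on the homogeneous space $V_\RR^{(i)} \cong G_\RR'/\mathrm{Stab}(v^{(i)})$ and the stabilizer is finite (of order $4n_i$, as noted earlier in the paper), invariance of Haar measure on $G_\RR'$ shows that the pushforward of $dg$ under $g \mapsto g\cdot v^{(i)}$ is proportional to $|\disc(v)|^{-1}\,dv$; the constant $c_i$ is exactly the proportionality factor (incorporating the stabilizer size and the choice of normalization of $dg$, and is nonzero since both measures are nonzero and positive on opens). Integrating $f$ against both sides over $V_\RR^{(i)}$ gives precisely $\int_{G_\RR'} f(g\cdot v^{(i)})\,dg = c_i \int_{V_\RR^{(i)}} |\disc(v)|^{-1} f(v)\,dv$, which is the claim. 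Since the Proposition is stated to follow directly from \cite[Proposition~2.4]{Sh}, \cite[Proposition~21]{DD4}, and \cite[Proposition~16]{DD5}, I would in fact just cite those sources for the three cases $n = 3, 4, 5$ respectively, and include the paragraph above only as the conceptual explanation of why such a formula holds.

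The main obstacle, were one to prove this from scratch rather than cite it, is verifying the matching of characters in each of the three representations: one must compute explicitly how $\disc$ transforms under $G_\RR = \gl_{n-1}(\RR) \times \gl_{r-1}(\RR)$ — recording the exponents of $|\det g_{n-1}|$ and $|\det g_{r-1}|$ — and confirm that this character, raised to the appropriate power, equals the character by which the top exterior power $\wedge^d V_\RR$ transforms (equivalently, that $\disc^1$ already cuts out the relative invariant of the full $d$-dimensional determinant character, with no residual unaccounted-for twist). This is a finite representation-theoretic check, different in each of the three cases (binary cubics, pairs of ternary quadratics, quadruples of alternating quinary forms), and is exactly the content of the cited propositions; identifying the constants $c_i$ explicitly is not needed here since only the ratios of volumes are used in Theorem~\ref{thm:6}.
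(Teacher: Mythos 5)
Your proposal is correct and matches the paper's treatment: the paper simply quotes this as an immediate consequence of \cite[Proposition 2.4]{Sh}, \cite[Proposition 21]{DD4}, and \cite[Proposition 16]{DD5}, which is exactly what you propose to do, and your accompanying conceptual justification (the orbit is open with finite stabilizer, $G_\RR'$ is unimodular, and $|\!\disc(v)|^{-1}dv$ is the invariant measure on the orbit because the character by which $\disc$ transforms agrees in absolute value with the Jacobian character of the linear action) is the standard argument underlying those cited propositions.
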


\noindent
(We always use $dg$ to denote a fixed Haar measure.)

In the above proposition, 
set $f(v)$ to be in turn $\chi_{\mc{R}_{1,W}}(v)|\disc(v)|$ and 
$\chi_{\mc{R}_{1}}(v)|\disc(v)|$, where $\chi_{\mc{R}_{1,W}}$ and $\chi_{\mc{R}_{1}}$ denote the characteristic functions of $\mc{R}_{1,W}$ and $\mc{R}_1$, respectively. Then,
\begin{equation}\label{volformula}
\frac{\vol(\mc{R}_{1,W})}{\vol(\mc{R}_1)} = \frac{\displaystyle\int_{g\in G_\RR'}\chi_{\mc{R}_{1,W}}(g\cdot v^{(i)})|\disc(g\cdot v^{(i)})|dg}{\displaystyle\int_{g\in G_\RR'}\chi_{\mc{R}_1}(g\cdot v^{(i)})|\disc(g\cdot v^{(i)})|dg}.
\end{equation}

Since $\mc{R}_1$ lies in a fundamental domain for the action of $G_\ZZ'$ on $V_\RR^{(i)}$,  the integrals appearing in 
(\ref{volformula}) may naturally be taken over 
$G_\ZZ'\backslash G_\RR'$.
Now if $g=(\lambda,g_{n-1},g_{r-1})\in G_\RR'$, 
then $\lambda$ does not affect the shape (i.e., $q((\lambda,g_{n-1},g_{r-1})\cdot v)=q((1,g_{n-1},g_{r-1})\cdot v)$).
Since $\disc(g\cdot v^{(i)})=\lambda^{d}$, the ratio (\ref{volformula}) becomes
$$\frac{\vol(\mc{R}_{1,W})}{\vol(\mc{R}_1)} = \frac{\displaystyle\int_0^1\lambda^{{d}}d^\times\lambda\int_{g\in G''_\ZZ\backslash G''_\RR}\chi_{\mc{R}_{1,W}}(g\cdot v^{(i)})dg}{\displaystyle\int_0^1\lambda^{{d}}d^\times\lambda\int_{g\in G''_\ZZ\backslash G''_\RR}\chi_{\mc{R}_1}(g\cdot v^{(i)})dg},$$
where we use $G_T''$ to denote simply $\gl^{\pm1}_{n-1}(T) \times \gl^{\pm1}_{r-1}(T)$.
In fact, for any $g\in G_\RR'$ where $0<\lambda<1$, we have that $g\cdot v^{(i)}$ has absolute discriminant $<1$, and so the characteristic function $\chi_{\mc{R}_1}$ occurring in the integral in the denominator can be removed.
Now the factor of $\gl^{\pm1}_{r-1}(T)$ also does not affect the shape, and thus
$$\frac{\vol(\mc{R}_{1,W})}{\vol(\mc{R}_1)} = \frac{\displaystyle\int_{g\in \gl^{\pm1}_{r-1}(\ZZ)\backslash \gl^{\pm1}_{r-1}(\RR)}dg\int_{g\in \gl^{\pm1}_{n-1}(\ZZ)\backslash \gl^{\pm1}_{n-1}(\RR)}\chi_{\mc{R}_{1,W}}(g\cdot v^{(i)})dg}{\displaystyle\int_{g\in \gl^{\pm1}_{r-1}(\ZZ)\backslash \gl^{\pm1}_{r-1}(\RR)}dg\int_{g\in \gl^{\pm1}_{n-1}(\ZZ)\backslash \gl^{\pm1}_{n-1}(\RR)}dg}.$$
Since $q(g\cdot v^{(i)})=
q(v^{(i)})$ for $g\in \go^{\pm1}(\RR)$,  we obtain
$$\frac{\vol(\mc{R}_{1,W})}{\vol(\mc{R}_1)} = \frac{\displaystyle
\int_{g\in \gl^{\pm1}_{n-1}(\ZZ)\backslash \gl^{\pm1}_{n-1}(\RR)/\go^{\pm1}_{n-1}(\RR)}\chi_{\mc{R}_{1,W}}(g\cdot v^{(i)})dg\int_{k\in\go^{\pm1}_{n-1}(\RR)}dk}
{\displaystyle\int_{g\in \gl^{\pm1}_{n-1}(\ZZ)\backslash \gl^{\pm1}_{n-1}(\RR)/\go^{\pm1}_{n-1}(\RR)}dg\int_{k\in\go^{\pm1}_{n-1}(\RR)}dk}.$$
Since the $g\in\gl^{\pm1}_{n-1}(\ZZ)\backslash \gl^{\pm1}_{n-1}(\RR)/\go^{\pm1}_{n-1}(\RR)$ such that $g\cdot v^{(i)}$ has shape in $W$ are exactly those that are in $W$, we obtain Theorem~\ref{thm:6}.

\subsection*{Acknowledgments}

We are extremely grateful to Rob Harron, Wei Ho, Hendrik Lenstra, Melanie Matchett Wood, Peter Sarnak, and Arul Shankar for many helpful conversations. 
It is also a pleasure to thank the Packard Foundation and the National Science Foundation (grant DMS-1001828) for their kind support.

\bibliographystyle{amsplain}
\bibliography{equiarxivfinal.bib}


\end{document}